\theoremstyle{plain}
\newtheorem{thm}{Theorem}[section]
\newtheorem{lem}[thm]{Lemma}
\newtheorem{cor}[thm]{Corollary}
\newtheorem{pro}[thm]{Proposition}
\newtheorem{claim}{Claim}
\begin{document}
\title{On uniformly continuous surjections between function spaces}

\author{Al\.{ı} Emre Eysen}
\address{Department of Mathematics, Faculty of Science, Trakya University, Edirne, Turkey}
\email{aemreeysen@trakya.edu.tr}
\thanks{The first author was partially supported by TUBITAK-2219}

\author[c]{Vesko Valov}
\address{Department of Computer Science and Mathematics, Nipissing University,
100 College Drive, P.O. Box 5002, North Bay, ON, P1B 8L7, Canada}
\email{veskov@nipissingu.ca}
\thanks{The second author was partially supported by NSERC Grant 261914-19}

\keywords{$C_p(X)$-space, $C$-space, 0-dimensional spaces, strongly countable-dimensional spaces,  uniformly continuous surjections}
\subjclass[2010]{Primary 54C35; Secondary 54F45}


\begin{abstract}
We consider uniformly continuous surjections between $C_p(X)$ and $C_p(Y)$ (resp, $C_p^*(X)$ and $C_p^*(Y$)) and show that if $X$ has some dimensional-like properties, then so does $Y$. In particular, we prove that if $T:C_p(X)\to C_p(Y)$ is a continuous linear surjection and $\dim X=0$, then $\dim Y=0$. This provides a positive answer to a question raised by Kawamura-Leiderman \cite[Problem 3.1]{kl}.    
\end{abstract}
\maketitle

\markboth{}{Uniformly continuous surjections}



\section{Introduction}
All spaces in this paper, if not said otherwise, are Tychonoff spaces and all maps are continuous.
By $C(X)$ (resp., $C^*(X)$) we denote the set of all continuous (resp., continuous and bounded) real-valued functions on a space $X$. We write 
$C_p(X)$ (resp., $C_p^*(X)$) for the spaces $C(X)$ (resp., $C^*(X)$) endowed with the pointwise topology. More information about the spaces $C_p(X)$ can be found in \cite{tk}.
By dimension we mean the {\em covering dimension $\dim$} defined by finite functionally open covers, see \cite{en}. According to that definition, we have $\dim X=\dim\beta X$, where $\beta X$ is the \v{C}ech-Stone compactification of $X$. 

After the striking result of Pestov \cite{p} that $\dim X=\dim Y$ provided that $C_p(X)$ and $C_p(Y)$ are linearly homeomorphic, and Gul'ko's \cite{gu} generalization of Pestov's theorem that the same is true for uniformly continuous homeomorphisms, a question arose whether $\dim Y\leq\dim X$ if there is continuous linear surjection from $C_p(X)$ onto $C_p(Y)$, see \cite{ar}. This was answered negatively by Leiderman-Levin-Pestov \cite{llp} and Leiderman-Morris-Pestov \cite{lmp}. On the other hand, it was shown in \cite{llp} that if there is a linear continuous surjection $C_p(X)\to C_p(Y)$ such that $X$ and $Y$ are compact metrizable spaces and $\dim X=0$, then $\dim Y=0$. The last result was extended for arbitrary compact spaces by Kawamura-Leiderman \cite{kl} who also raised the question whether this is true for any Tychonoff spaces $X$ and $Y$. In this paper we provide a positive answer to that question and discuss the situation when the surjection $T:C_p(X)\to C_p(Y)$ is uniformly continuous. Let us note that the preservation of dimension under linear homeomorphisms doesn't hold for function spaces with the uniform norm topology. Indeed, according to the classical result of Milutin \cite{mi} if $X$ and $Y$ are any uncountable metrizable compacta, then there is a linear homeomorphism between the Banach spaces $C(X)$ and $C(Y)$.  

Suppose $E_p(X)\subset C_p(X)$ and $E_p(Y)\subset C_p(Y)$ are subspaces containing the zero functions on $X$ and $Y$, respectively. Recall that a map $\varphi: E_p(X)\to E_p(Y)$ is {\em uniformly continuous} if for every neighborhood $U$ of the zero function in $E_p(Y)$ there is a neighborhood $V$ of the zero function in $E_p(X)$ such that $f,g\in E_p(X)$ and $f-g\in V$ implies $\varphi(f)-\varphi(g)\in U$. Evidently, if $E_p(X)$ and $E_p(Y)$ are linear spaces, then every linear continuous map between $E_p(X)$ and $E_p(Y)$ is uniformly continuous.
If $f\in C_p(X)$ is a bounded function, then $||f||$ stands for the supremum norm of $f$. The notion of $c$-good maps was introduced in \cite{gkm} (see also \cite{gf}), where $c$ is a positive number. 
A map $\varphi: E_p(X)\to E_p(Y)$ is {\em $c$-good} if for every bounded function $g\in E_p(Y)$ there exists a bounded function $f\in E_p(X)$ such that $\varphi(f)=g$ and $||f||\leq c||g||$.

Everywhere below, by $D(X)$ we denote either $C^*(X)$ or $C(X)$. Here is one of our main results.
\begin{thm}
Let $T:D_p(X)\to D_p(Y)$ be a $c$-good uniformly continuous surjection for some $c>0$.
Then $Y$ is $0$-dimensional provided so is $X$.
\end{thm}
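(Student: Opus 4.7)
The plan is to reduce the problem to showing that any two disjoint zero-sets $F_0,F_1\subset Y$ can be separated by a functionally clopen partition of $Y$, and to produce that partition from a $0$-dimensional partition of $X$ pulled back through $T$ with the help of the $c$-good hypothesis. Since $\dim Y=\dim\beta Y$, working with bounded functions loses nothing, which is why the $c$-good bound $\|f\|\le c\|g\|$ on the lifts is crucial: it keeps us inside a uniform-continuity-controlled region of $D_p(X)$.

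First I would extract from the uniform continuity of $T$ the standard \emph{support machinery}. For each $y\in Y$ and each $n\in\mathbb{N}$, the basic pointwise neighborhood of the zero function at $y$ of radius $1/n$ pulls back, via uniform continuity, to a neighborhood determined by a finite set $K_n(y)\subset X$ and a tolerance $\delta_n(y)>0$: whenever $f,g\in D(X)$ (bounded by some $M$ if $D=C^*$) satisfy $|f(x)-g(x)|<\delta_n(y)$ on $K_n(y)$, then $|T(f)(y)-T(g)(y)|<1/n$. Standard minimality arguments, together with the $c$-good property applied to functions that vanish near a candidate support, show that one can refine this to a nonempty set-valued support map $\mathrm{supp}\colon Y\to\exp_\omega(\beta X)$ into the hyperspace of nonempty finite subsets of $\beta X$, with the appropriate lower/upper semicontinuity that propagates to zero-sets of $Y$.

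Next I would exploit $\dim X=0$, equivalently $\dim\beta X=0$. Given disjoint zero-sets $F_0,F_1\subset Y$, consider the sets $S_i=\bigcup\{\mathrm{supp}(y):y\in F_i\}\subset\beta X$. Using the semicontinuity of $\mathrm{supp}$ and a compactness/$c$-goodness argument, one shows that $S_0$ and $S_1$ can be enclosed in disjoint closed subsets of $\beta X$; $0$-dimensionality of $\beta X$ then yields a clopen partition $\beta X=U_0\sqcup U_1$ with $S_i\subset U_i$, and hence a bounded continuous $h\colon X\to\{0,1\}$. Applying $c$-goodness to suitable bounded targets and feeding them through $T$, together with the fact that values of $T(f)(y)$ are essentially determined by $f|_{\mathrm{supp}(y)}$, produces a bounded continuous function on $Y$ that takes distinct constant values on $F_0$ and $F_1$, and whose level structure yields the desired clopen partition of $Y$.

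The main obstacle will be Step 2/3: the support map is not continuous, only semicontinuous, so the passage from a clopen partition of $\beta X$ to a clopen partition of $Y$ is not automatic. The delicate point is to keep the supports finite and ``essential'' uniformly across $F_0\cup F_1$, which is where the quantitative bound $\|f\|\le c\|g\|$ in the $c$-good definition is indispensable: it prevents the supports from drifting out to infinity in a way that would destroy the semicontinuity used to convert a $\beta X$-partition into a $Y$-partition. Once this is arranged, producing the final clopen set in $Y$ from a $\{0,1\}$-valued lift is a bookkeeping exercise on neighborhoods.
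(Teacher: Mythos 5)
Your Step 3 contains a fatal error. You propose to take disjoint zero-sets $F_0,F_1\subset Y$, form $S_i=\bigcup\{\mathrm{supp}(y):y\in F_i\}$, and enclose $S_0$ and $S_1$ in disjoint closed subsets of $\beta X$. But supports of distinct points of $Y$ routinely intersect and can even coincide, so $S_0$ and $S_1$ need not be disjoint at all. A minimal example: $X=\{0,1\}$, $Y=\{a,b\}$, $T(f)(a)=f(0)+f(1)$, $T(f)(b)=f(0)-f(1)$; this is a linear ($c$-good, uniformly continuous) surjection and $\mathrm{supp}(a)=\mathrm{supp}(b)=\{0,1\}$, so with $F_0=\{a\}$, $F_1=\{b\}$ you get $S_0=S_1$ and no separation is possible. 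A second, independent problem is your assertion that "values of $T(f)(y)$ are essentially determined by $f|_{\mathrm{supp}(y)}$": for a merely uniformly continuous $T$ all the support machinery gives you is the finiteness of $a(y,K)=\sup\{|T(f)(y)-T(g)(y)|: |f-g|<1 \text{ on } K\}$, i.e.\ control of oscillation up to a bounded error, not determination of the value; so even a genuine clopen partition of $X$ with $S_i\subset U_i$ would not feed through $T$ to a function taking distinct constant values on $F_0$ and $F_1$.

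The mechanism that actually works (and is what the paper uses) is entirely different: the support map is exploited not to separate anything in $X$ but as a continuous \emph{finite-to-one} map from countably many compact pieces of (a compactification of) $Y$ into the hyperspaces $[H_k]^q$ of $q$-point subsets of a $0$-dimensional compactum. Zero-dimensionality then transfers backwards because perfect maps with finite fibers onto $0$-dimensional spaces have $0$-dimensional domains, and a $\sigma$-compact metrizable union of closed $0$-dimensional sets is $0$-dimensional. The $c$-good hypothesis enters precisely to prove the fibers of these support maps are finite (by lifting bump functions at the points of a putative infinite fiber to a norm-bounded, hence relatively compact, family and deriving a contradiction), not to control "drift to infinity." Finally, your reduction step is also incomplete: for general Tychonoff $X,Y$ one must first factorize through second countable images via Pestov's criterion, which requires the whole inductive construction of countable $QS$-algebras on both sides; "$\dim Y=\dim\beta Y$" alone does not make the support machinery (which needs countable algebras and metrizable hyperspaces) available.
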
 

\begin{cor}
Suppose there is a linear continuous surjection from $C_p^*(X)$ onto $C_p^*(Y)$.
Then $Y$ is $0$-dimensional provided that so is $X$.
\end{cor}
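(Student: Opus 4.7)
The plan is to reduce the corollary to Theorem~1.1 by showing that any continuous linear surjection $T:C_p^*(X)\to C_p^*(Y)$ automatically satisfies the two hypotheses of that theorem: uniform continuity and the $c$-good property for some $c>0$. Once these are in hand, Theorem~1.1 applied with $D=C^*$ yields the conclusion at once.

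Uniform continuity is essentially automatic from linearity. Given a neighborhood $U$ of $0$ in $C_p^*(Y)$, pointwise continuity of $T$ at $0$ supplies a neighborhood $V$ of $0$ in $C_p^*(X)$ with $T(V)\subset U$; linearity then gives $T(f)-T(g)=T(f-g)\in U$ whenever $f-g\in V$.

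The substantive step is the $c$-good property. My plan is to change topology: the same underlying map $T$ can also be viewed as a linear map between the Banach spaces $(C^*(X),\|\cdot\|)$ and $(C^*(Y),\|\cdot\|)$ equipped with the sup norm. I would first verify that this map has closed graph. Indeed, if $f_n\to f$ and $T(f_n)\to g$ in sup norm, then both convergences hold pointwise, so pointwise continuity of $T$ forces $T(f_n)\to T(f)$ pointwise, and hence $g=T(f)$. The closed graph theorem then makes $T$ norm-continuous, and since $T$ remains surjective between the same underlying sets, the open mapping theorem provides $\delta>0$ such that the sup-norm ball of radius $\delta$ in $C^*(Y)$ lies in the image of the sup-norm unit ball of $C^*(X)$. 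A routine rescaling argument then produces, for every $g\in C^*(Y)$, a preimage $f\in C^*(X)$ with $\|f\|\le c\|g\|$ for a constant $c$ depending only on $\delta$, which is exactly the $c$-good property.

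I do not anticipate any significant obstacle. The only nontrivial ingredient is the closed-graph step, which rests on the elementary observation that sup-norm convergence implies pointwise convergence; everything else is a standard assembly of functional-analytic tools together with Theorem~1.1.
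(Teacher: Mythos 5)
Your proposal is correct and follows essentially the same route as the paper: the paper's Proposition 3.3 also obtains norm-continuity of $T$ via the Closed Graph Theorem (using exactly the observation that sup-norm convergence implies pointwise convergence, where $T$ is continuous) and then extracts the $c$-good constant from the bounded inverse of the induced isomorphism $C^*_u(X)/\ker T\to C^*_u(Y)$, which is the quotient-space formulation of your open-mapping/rescaling step. The uniform continuity of a linear pointwise-continuous map is likewise noted as immediate in the paper, so no further comment is needed.
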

Corollary 1.2 follows from Theorem 1.1 because every linear continuous surjection between $C_p^*(X)$ and $C_p^*(Y)$ is $c$-good for some $c>0$, see Proposition 3.3. Moreover, as one of the referees observed, it is not possible to have a continuous linear surjection $T:C_p^*(X)\to C_p(Y)$ with $C_p(Y)\neq C_p^*(Y)$. Indeed, we consider the composition $T\circ i$, where $i:C_p(\beta X)\to C_p^*(X)$ is the restriction map. Then, according to \cite[Proposition 2]{vu}, $Y$ is pseudocompact and $C_p(Y)=C_p^*(Y)$.  

We consider properties $\mathcal P$ of $\sigma$-compact metrizable spaces such that:
\begin{itemize}
\item [(a)] If $X\in\mathcal P$ and $F\subset X$ is closed, then $F\in\mathcal P$;
\item [(b)] If $X$ is a countable union of closed subsets each having the property $\mathcal P$, then $X\in\mathcal P$;
\item [(c)] If $f:X\to Y$ is a perfect map with countable fibers and $Y\in\mathcal P$, then $X\in\mathcal P$.
\end{itemize}
For example, {\em $0$-dimensionality}, {\em strongly countable-dimensionality} and {\em $C$-space property} are finitely multiplicative properties satisfying conditions $(a)-(c)$. Another two properties of this type, but not finitely multiplicative, 
are {\em weakly infinite-dimensionality}, or {\em $(m-C)$-spaces} in the sense of Fedorchuk \cite{vf}. 
The definition of all dimension-like properties mentioned above, and explanations that they satisfy conditions $(a)-(c)$ can be found at the end of Section 2.

For $\sigma$-compact metrizable spaces we have the following version of Theorem $1.1$ (actually Theorem 1.3 below is true for any topological property $\mathcal P$ satisfying conditions $(a)-(c)$):
\begin{thm}
Suppose $X$ and $Y$ are $\sigma$-compact metrizable spaces and let $T:D_p(X)\to D_p(Y)$ be a $c$-good uniformly continuous surjection for some $c>0$. 
If all finite powers of $X$ are weakly infinite-dimensional or $(m-C)$-spaces, then all finite powers of $Y$ are also weakly infinite-dimensional or $(m-C)$-spaces.
\end{thm}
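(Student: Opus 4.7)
The plan is to reduce the theorem to a decomposition argument and invoke the closure properties (a)--(c). Fix an exponent $d\ge 1$; the goal is to show $Y^d\in\mathcal{P}$, treating $\mathcal{P}$ abstractly as a property satisfying (a)--(c) (so the same argument will cover both weak infinite-dimensionality and the $(m-C)$ case).

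First, following the Gul'ko--Pestov strategy adapted to uniformly-continuous $c$-good surjections (presumably the content of the main technical lemma underlying Theorem~1.1), I would establish that for each $y\in Y$ and each $\varepsilon>0$ there is a finite ``$\varepsilon$-support'' $S_\varepsilon(y)\subset X$ with the property that $T(f)(y)$ is determined to within $\varepsilon$ by $f|_{S_\varepsilon(y)}$. The construction starts from a countable neighborhood basis of the zero function in $D_p(Y)$: uniform continuity pulls these neighborhoods back through $T$ to neighborhoods in $D_p(X)$ that are controlled by finite subsets of $X$, and $c$-goodness keeps the preimages bounded.

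Next, I stratify $Y^d$ into countably many compact pieces on which the supports are uniformly bounded. Choose compact exhaustions $X=\bigcup_{m}K_m$ and $Y^d=\bigcup_m L_m$ coming from the $\sigma$-compactness of $X$ and $Y$. For $\mathbf{n}=(n_1,\ldots,n_d)\in\mathbb{N}^d$ and $m\in\mathbb{N}$ set
\[
Y^d_{\mathbf{n},m}=L_m\cap\{(y_1,\ldots,y_d)\in Y^d:|S_{1/m}(y_i)|\le n_i\text{ and }S_{1/m}(y_i)\subset K_m\text{ for every }i\}.
\]
Upper-semicontinuity of the $\varepsilon$-support (a by-product of its construction) makes each $Y^d_{\mathbf{n},m}$ closed in $L_m$, hence compact, and the strata cover $Y^d$. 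Using the metric on $X$ to order the elements of $S_{1/m}(y_1)\cup\cdots\cup S_{1/m}(y_d)$, I obtain a continuous map $\Phi:Y^d_{\mathbf{n},m}\to K_m^{n_1+\cdots+n_d}\subset X^{n_1+\cdots+n_d}$, which is perfect because the domain is compact and the codomain is Hausdorff.

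To conclude, the hypothesis gives $X^{n_1+\cdots+n_d}\in\mathcal{P}$, so by (a) the closed subset $\Phi(Y^d_{\mathbf{n},m})$ is in $\mathcal{P}$; property (c) applied to the perfect map $\Phi$ then yields $Y^d_{\mathbf{n},m}\in\mathcal{P}$, and (b) applied to the countable cover $\{Y^d_{\mathbf{n},m}\}_{\mathbf{n},m}$ gives $Y^d\in\mathcal{P}$. The main obstacle is verifying the countable-fiber hypothesis of (c) for $\Phi$: only countably many tuples in a single stratum may share a common support pattern. This separation claim is where both uniform continuity and $c$-goodness are essentially used; one takes a countable dense family of test functions in the separable space $C_p(K_m)$ and pushes them through $T$ to obtain a countable family of real-valued maps on the stratum whose joint values distinguish tuples with a fixed support.
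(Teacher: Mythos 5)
Your overall architecture (stratify $Y^d$ into countably many compact pieces carrying a finite-to-one support map into a power of $X$, then apply (a), (c), (b)) matches the paper's, but the two steps you identify as delicate are exactly where your proposal breaks. The crux is the fiber-cardinality claim, and your proposed mechanism for it does not work. A countable family of continuous real-valued functions whose joint values separate the points of a compact fiber only shows that the fiber is metrizable (it embeds in $\mathbb{R}^{\mathbb{N}}$) --- it does not make the fiber countable; $[0,1]$ admits such a family. Moreover, you push test functions \emph{forward} through $T$, whereas the actual use of $c$-goodness goes in the opposite direction: one assumes the fiber is infinite, extracts a convergent sequence $\{y_m\}$ of distinct points, builds bump functions $g_m$ on $Y$ with $g_m(y_m)=2p$ and $g_m(y_j)=0$ for $j\neq m$, and uses $c$-goodness to \emph{lift} them to $f_m$ on $X$ with $\|f_m\|\leq 2pc$. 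The lifts then lie in the compact set $[-2pc,2pc]^{X}$, so two of them are uniformly close on the finite common support, forcing $|T(f_i)(y_j)-T(f_j)(y_j)|\leq p$, which contradicts $|g_j(y_j)-g_i(y_j)|=2p$. This gives \emph{finite} fibers and is the only place surjectivity-plus-$c$-goodness enters; without it your stratification proves nothing.

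Two further gaps. First, you cannot ``use the metric on $X$ to order the elements of $S_{1/m}(y_1)\cup\cdots\cup S_{1/m}(y_d)$'' continuously: any attempted linear ordering of the points of a varying finite set jumps when two points exchange roles, so there is no continuous map into $K_m^{n_1+\cdots+n_d}$. The paper instead maps into the symmetric power $[X]^{q}$ with the Vietoris topology and separately shows that $[X]^{q}$ inherits $\mathcal{P}$ from $X^{q}$ by covering it with closed sets $W(U_1,\ldots,U_q)$ homeomorphic to products $\overline{U}_1\times\cdots\times\overline{U}_q$; your argument needs this substitute. Second, the ``upper semicontinuity of the $\varepsilon$-support'' is not a free by-product: an $\varepsilon$-support in your sense is far from unique, so the condition $|S_{1/m}(y_i)|\le n_i$ is not well defined, let alone closed. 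One must first show the family of supports is closed under intersection (which requires the bump-function/algebra structure of condition (2.3)) so that a minimal support exists, and then pass to the sets $M(p,q)=Y_{p,q}\setminus Y_{2p,q-1}$ on which the $q$-point support is unique and varies continuously. These repairs are all available, but as written the proposal's key steps fail.
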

M. Krupski \cite{k1} proved similar result for $\sigma$-compact metrizable spaces: if $T:C_p(X)\to C_p(Y)$ is a continuous open surjection and all powers of $X$ are weakly infinite-dimensional or $(m-C)$-spaces, then $Y$ is also weakly infinite-dimensional of has the property $m-C$.
Theorem 1.3 was established in \cite{gkm} in the special case when $X,Y$ are compact metrizable spaces and the property is either $0$-dimensionality or strong countable-dimensionality. 

The notion of $c$-good maps is crucial in the proof of the above results. One of the referees observed that if $X=Y$ is the space of natural numbers, then the continuous linear surjection $T:C_p(X)\to C_p(Y)$, $T(f)(n)=\frac{1}{n+1}f(n)$, is not $c$-good for any $c$. Therefore, as the referee suggested, the more interesting question is whether the existence of a linear continuous surjection $T:C_p(X)\to C_p(Y)$ implies the existence of another linear continuous surjection $S:C_p(X)\to C_p(Y)$ which is $c$-good for some $c>0$. In that connection, let us note that more general notion 
was considered in our recent paper \cite{elv}: $T:D_p(X)\to D_p(Y)$ is {\em inversely bounded} if for every norm bounded sequence $\{g_n\}\subset C^*(Y)$ there exists a norm bounded sequence $\{f_n\}\subset C^*(X)$ with $T(f_n)=g_n$ for all $n$. Evidently, every $c$-good map is inversely bounded. It was established in \cite{elv} that uniformly continuous inversely bounded surjections $T:D_p(X)\to D_p(Y)$, where $X$ and $Y$ are metrizable, preserve any one of the properties $0$-dimensionality, countable-dimensionality or strong countable-dimensionality. 
Most probably Theorem 1.1 remains true if the surjection $T$ is uniformly continuous and inversely bounded.

Finally, here is the theorem which provides a positive answer to the question of Kawamura-Leiderman \cite[Problem 3.1]{kl} mentioned above:
\begin{thm}
Let $T:C_p(X)\to C_p(Y)$ be a linear continuous surjection. If $\dim X=0$, then $\dim Y=0$. 
\end{thm}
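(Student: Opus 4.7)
The starting observation is that a continuous linear surjection $T: C_p(X) \to C_p(Y)$ need not be $c$-good (as the paper itself notes via a referee's example), so Theorem 1.1 does not apply directly. The plan is to use the support-function representation of continuous linear functionals on $C_p(X)$ to stratify $Y$, and then establish $0$-dimensionality stratum by stratum.

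For each $y \in Y$, the evaluation $f \mapsto T(f)(y)$ is a continuous linear functional on $C_p(X)$, hence of the form
\[
T(f)(y) = \sum_{i=1}^{n(y)} \lambda_i(y)\, f(x_i(y))
\]
with uniquely determined distinct points $x_i(y) \in X$ and nonzero scalars $\lambda_i(y)$; surjectivity of $T$ forces $n(y) \geq 1$. Write $\mathrm{supp}(y) = \{x_1(y),\ldots,x_{n(y)}(y)\}$ and $Y_n = \{y \in Y : n(y) \leq n\}$. Standard arguments give that the set-valued support map is lower semicontinuous into the hyperspace of finite subsets of $X$, and consequently each $Y_n$ is closed in $Y$, with $Y = \bigcup_{n \geq 1} Y_n$.

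The core step is to show each $Y_n$ admits a clopen base. Fix $y_0 \in Y_n$ with $\mathrm{supp}(y_0) = \{x_1,\ldots,x_k\}$, $k \leq n$; using $\dim X = 0$, choose pairwise disjoint clopen neighborhoods $U_i \ni x_i$ in $X$. Combining the representation above with surjectivity of $T$, one should build a clopen neighborhood of $y_0$ in $Y_n$ by lifting suitable functions adapted to the $U_i$ through $T$ and exploiting lower semicontinuity of $\mathrm{supp}$ to control the supports of nearby $y \in Y_n$. Assembling these local clopen bases and using $\dim Y = \dim \beta Y$, together with a countable-sum argument applied in the normal space $\beta Y$, then gives $\dim Y = 0$.

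The main obstacle I anticipate is the lifting step: surjectivity of $T$ provides preimages with no norm estimate, so the functions chosen to separate the $U_i$ need not be bounded. The natural remedy is to localize further on pieces of $Y_n$ on which the coefficient mass $\sum_i |\lambda_i(y)|$ is uniformly bounded; on each such piece $T$ is effectively $c$-good between appropriate bounded subspaces, and either Corollary 1.2 or a direct adaptation of the proof of Theorem 1.1 can then be invoked to conclude $\dim = 0$ on that piece. Patching back via the countable covering $Y = \bigcup_n Y_n$ completes the argument.
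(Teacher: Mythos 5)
Your overall strategy --- representing each evaluation $f\mapsto T(f)(y)$ as $\sum_i\lambda_i(y)f(x_i(y))$ and stratifying $Y$ by support size and coefficient mass --- is indeed the skeleton of the paper's Section~4, but the two steps you leave open are precisely where the real work lies, and the fixes you propose do not close them. First, "each $Y_n$ has a clopen base, hence $\dim Y_n=0$" is invalid for general Tychonoff spaces: a clopen base gives $\mathrm{ind}=0$, which does not imply $\dim=0$ (covering dimension via finite functionally open covers) outside the separable metrizable or compact setting, and the countable sum theorem for $\dim$ is likewise unavailable for arbitrary Tychonoff spaces; passing to $\beta Y$ does not help, since the $Y_n$ are closed only in $Y$ and their closures in $\beta Y$ need not retain either the covering property or the dimension. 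The paper avoids this entirely by never arguing in $Y$ itself: it invokes Pestov's criterion $(3.1)$ to reduce to showing that suitable second-countable images $h_0(Y)$ are $0$-dimensional, and the long inverse-limit construction in the proof of Theorem 1.4 (with Lemmas 4.2--4.3) exists precisely to manufacture a separable metrizable $Y_0$, compactifications $\overline X_0,\overline Y_0$, and an induced linear surjection $\varphi$ satisfying the hypotheses of Proposition 4.1. Your proposal contains no such reduction.

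Second, the $0$-dimensionality of each stratum is exactly the step you phrase as "one should build a clopen neighborhood\dots", and the proposed remedy does not supply it: on a piece where $\sum_i|\lambda_i(y)|\le p$ there is no induced linear surjection $C_p^*(X)\to C_p^*(\mbox{piece})$ --- $T$ lands in functions on all of $Y$, and composing with restriction to the piece destroys surjectivity --- so Corollary 1.2 cannot be invoked, nor does $T$ become $c$-good in any sense usable in Theorem 1.1. What the paper actually does (Proposition 4.1) is extend the functionals $l_y$ over the metrizable compactification $\overline Y_0$ (with values in $\overline{\mathbb R}$), so that the strata $Y_{p,q}$ become \emph{compact}; the support map $S_{p,q}$ into $[\overline X_0]^q$ is then continuous, hence perfect, and its fibers $A(z)$ are shown to be finite (at most $q$ points) by a linear-algebra argument: the induced map $C_p(S_{p,q}(z))\cong\mathbb R^q\to C_p(A(z))$ has dense image because the real-valued part of $E(\overline Y_0)$ is dense in $C_p(\overline Y_0)$. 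Zero-dimensionality of $[\overline X_0]^q$ is then pulled back through this perfect, finite-fibered map and combined with the countable closed decomposition, all inside separable metrizable spaces where these sum and preimage theorems are valid. The compactification that makes the support map perfect and the finiteness-of-fibers mechanism are the two ideas your sketch is missing, and without them the argument does not go through.
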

\section{Preliminary results}
In this section we prove Proposition 2.1 which is used in the proofs of Theorem $1.1$ and Theorem $1.3$.  Our proof is based on the idea of support introduced by Gul'ko \cite{gu} and the extension of this notion introduced by Krupski \cite{k}.
 
Let $\mathbb Q$ be the set of rational numbers. A subspace $E(X)\subset C(X)$ is called a {\em $QS$-algebra} \cite{gu} if it satisfies the following conditions: (i) If $f,g\in E(X)$ and $\lambda\in\mathbb Q$, then all functions $f+g$, $f\cdot g$ and $\lambda f$ belong to $E(X)$; (ii) For every $x\in X$ and its neighborhood $U$ in $X$ there is $f\in E(X)$ such that $f(x)=1$ and $f(X\backslash U)=0$. 

We are using the following facts from \cite{gu}: 
\begin{itemize}
\item[(2.1)] If $X$ has a countable base and $\Phi\subset C(X)$ is a countable set, then there is a countable $QS$-algebra $E(X)\subset C(X)$ containing $\Phi$. Moreover, it follows from the proof of \cite[Proposition 1.2]{gu} that
    $E(X)\subset C^*(X)$ provided that $\Phi\subset C^*(X)$; 
\item[(2.2)] If $U$ is an open set in $X$, $x_1,x_2,..,x_k\in U$ and $\lambda_1,\lambda_2,..,\lambda_k\in\mathbb Q$, then there exists $f\in E(X)$ such that $f(x_i)=\lambda_i$ for each $i$ and $f(X\backslash U)=0$. 
\item[(2.3)] We consider the following condition for a $QS$-algebra $E(X)$ on $X$: For every compact set  $K\subset X$ and an open set $W$ containing $K$ there exists $f\in E(X)$ with $f|K=1$, $f|(X\backslash W)=0$ and $f(x)\in [0,1]$ for all $x\in X$. Note that 
if $X$ has a countable base $\mathcal B$, then there is a countable $QS$-algebra $E(X)$ on $X$ satisfying that condition. Indeed, we can assume that $\mathcal B$ is closed under finite unions and find $U,V\in\mathcal B$ such that $K\subset V\subset\overline V\subset U\subset\overline U\subset W$. Then consider the set $\Phi$ of all functions $f_{U,V}:X\to [0,1]$, where $\overline V\subset U$ with $U,V\in\mathcal B$, such that  
    $f_{V,U}|\overline V=1$ and $f_{V,U}|(X\backslash U)=0$. According to $(2.1)$, $\Phi$ can be extended to a countable $QS$-algebra $E(X)$ on $X$. 
\end{itemize}

Everywhere below we denote by $\overline{\mathbb R}$ the extended real line $[-\infty,\infty]$.
\begin{pro}
Let $\overline X$ and $\overline Y$ be metrizable compactifications of $X$ and $Y$, and $H\subset\overline X$ be a $\sigma$-compact space containing $X$. Suppose $E(H)$ is a $QS$-algebra on $H$ satisfying condition $(2.3)$, $E(X)=\{\overline f|X:\overline f\in E(H)\}$ and $E(Y)\subset C(Y)$ is a family such that every 
$g\in E(Y)$ is extendable to a map $\overline g:\overline Y\to\overline{\mathbb R}$ and $E(\overline Y)=\{\overline g:g\in E(Y)\}$ 
contains a $QS$-algebra on $\overline Y$. 
Let also $\varphi:E_p(X)\to E_p(Y)$ be a uniformly continuous surjection  which is $c$-good for some $c>0$. 

If all finite powers  of $H$ have a property $\mathcal P$ satisfying conditions $(a)-(c)$, then there exists a $\sigma$-compact set $Y_\infty\subset\overline Y$ containing $Y$ such that all finite powers of $Y_\infty$ have the same property $\mathcal P$. 
\end{pro}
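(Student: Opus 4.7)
The plan is to adapt the support-based techniques of Gul'ko \cite{gu} and Krupski \cite{k} to the present $QS$-algebra setting. For each $y\in\overline Y$ we will associate a support $\mathrm{supp}(y)\subset H$, stratify $\overline Y$ according to the cardinality of that support, and then verify that the strata carry property $\mathcal P$ via the perfect-map condition~(c). The key features to leverage are: (i)~condition~(2.3), which provides $E(H)$-cutoff functions isolating compacta of $H$; (ii)~the $c$-good hypothesis, which supplies preimages of controlled sup-norm for every bounded $g\in E(Y)$; and (iii)~uniform continuity of $\varphi$, which lets us transfer pointwise information across $\varphi$ using neighborhoods of the zero function built from finitely many coordinates.

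First I would define $\mathrm{supp}(y)$ for each $y\in\overline Y$ in the spirit of Krupski's construction: a point $x\in\overline X$ lies in $\mathrm{supp}(y)$ iff for every neighborhood $U$ of $x$ in $\overline X$ there exist $f_1,f_2\in E(X)$ agreeing on $X\setminus U$ with $\overline{\varphi(f_1)}(y)\neq\overline{\varphi(f_2)}(y)$ in $\overline{\mathbb R}$. Using the cutoff functions furnished by~(2.3) together with uniform continuity, I would then show $\mathrm{supp}(y)\subset H$ and, in fact, that $\mathrm{supp}(y)$ is finite: a rational-basis argument for neighborhoods of $0\in E_p(Y)$ reduces pointwise control of $\varphi$ to finitely many $x_i\in H$, and the $c$-good hypothesis rules out the degenerate situation in which no $x_i$ is ``detected'' by some $y\in Y$.

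Next I would set
\[
Y_n=\{y\in\overline Y:|\mathrm{supp}(y)|\le n\},\qquad Y_\infty=\bigcup_{n\ge 1}Y_n,
\]
and verify $Y\subset Y_\infty$ together with the $\sigma$-compactness of $Y_\infty$, using that $H$ is $\sigma$-compact and $\overline Y$ is a metrizable compactum. The central structural fact is that the enumeration map $y\mapsto(\mathrm{supp}(y)\text{ indexed with repetition})$ realizes a suitable $\sigma$-compact presentation of $Y_n$ as the domain of a perfect map with finite (hence countable) fibers into $H^n$. Since $H^n\in\mathcal P$ by hypothesis, condition~(a) applied to the closed image and condition~(c) applied to the perfect map yield $Y_n\in\mathcal P$; condition~(b) then gives $Y_\infty\in\mathcal P$.

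To pass to finite powers, I would repeat the argument coordinate-wise: $(Y_\infty)^k$ is the countable union of products $Y_{n_1}\times\cdots\times Y_{n_k}$, each of which maps perfectly with finite fibers into $H^{n_1+\cdots+n_k}\in\mathcal P$, so~(a) and~(c) give $Y_{n_1}\times\cdots\times Y_{n_k}\in\mathcal P$, and~(b) yields $(Y_\infty)^k\in\mathcal P$. The main obstacle I anticipate is the careful verification that the supports are finite and are contained in $H$ under the weak hypothesis that functions in $E(Y)$ are only required to admit $\overline{\mathbb R}$-valued (rather than bounded real-valued) extensions to $\overline Y$; reconciling this with the uniform-continuity bookkeeping is where condition~(2.3) and the $c$-good property must be combined delicately.
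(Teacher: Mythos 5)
Your overall strategy is the same as the paper's (a Gul'ko--Krupski support for each $y\in\overline Y$, a stratification of $\overline Y$ by the size of the support, and transfer of $\mathcal P$ via perfect maps with finite fibers into finite powers of $H$), but the proposal leaves unproved exactly the two steps that carry the real content. The first and most serious gap is the assertion that the support map has \emph{finite fibers}. Nothing in your sketch addresses why two distinct points $y,y'$ cannot share the same support; indeed this is the only place where the $c$-good hypothesis and the assumption that $E(\overline Y)$ contains a $QS$-algebra on $\overline Y$ are actually needed, and you never invoke the latter at all. The paper's argument (Claim 8) is: if a fiber over a $q$-point support $K$ were infinite, pick a convergent sequence $\{y_m\}$ of distinct points in it, use the $QS$-algebra on $\overline Y$ to produce bump functions $g_m$ with $g_m(y_m)=2p$ and $g_m=0$ off small separating neighborhoods, use $c$-goodness to lift these to $f_m\in E(X)$ with $\|f_m\|\le 2pc$, and then use compactness of $[-2pc,2pc]^{H}$ to find $i\neq j$ with $|\overline f_i-\overline f_j|<1$ on $K$; the support bound forces $|\alpha_{y_j}(\overline f_j)-\alpha_{y_j}(\overline f_i)|\le p$, contradicting the values $2p$ and $0$. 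Your one sentence about $c$-goodness ("rules out the degenerate situation in which no $x_i$ is detected") concerns nonemptiness of supports, which actually follows from mere surjectivity of $\varphi$ ($a(y,\varnothing)=\infty$); it does not touch fiber finiteness.

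The second gap is the stratification itself. You set $Y_n=\{y:|\mathrm{supp}(y)|\le n\}$ and claim the enumeration of the support gives a perfect map into $H^n$. But to get perfectness you need the pieces to be compact and the support map to be continuous on them, and for that the cardinality bound alone is not enough: the closedness of the strata and the continuity of $y\mapsto\mathrm{supp}(y)$ both require recording the quantitative bound $a(y,\cdot)\le p$ as well. The paper works with the doubly indexed sets $Y^k_{p,q}$ (support of size $\le q$ inside $H_k$ with constant $\le p$), proves these are closed in $\overline Y$, and then obtains continuity of the support map only on the difference sets $M^k(p,q)=Y^k_{p,q}\setminus Y^k_{2p,q-1}$, using the $2p$ versus $p$ gap to show the support cannot suddenly avoid an open set. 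On your coarser strata the support map need not be continuous, so condition (c) cannot be applied as stated. Both gaps are repairable by following the Krupski-style bookkeeping, but as written the proposal skips the parts of the proof where the hypotheses of the proposition are actually consumed.
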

\begin{proof}
We fix a countable base $\mathcal B$ of $H$ which is closed under finite unions, and denote by  $f$ the restriction $\overline f|X$ of any 
$\overline f\in E(H)$. 
For every $y\in\overline Y$ there is a map $\alpha_y:E(H)\to\overline{\mathbb R}$, $\alpha_y(\overline f)=\overline{\varphi(f)}(y)$. Since $\varphi$ is uniformly continuous, so is the map $\beta_y:E_p(X)\to\mathbb R$, $\beta_y(f)=\varphi(f)(y)$. 
Let $H=\bigcup_kH_k$ be the union of an increasing sequence $\{H_k\}$ of compact sets.
Following Krupski \cite{k}, for every $y\in\overline Y$ and every $p,k\in\mathbb N$ we define the families
$$\mathcal A^k(y)=\{K\subset H_k:K{~}\mbox{is closed and}{~}a(y,K)<\infty\}$$ and
$$\mathcal A_p^k(y)=\{K\subset H_k:K{~}\mbox{is closed and}{~}a(y,K)\leq p\},$$
where 
$$a(y,K)=\sup\{|\alpha_y(\overline f)-\alpha_y(\overline g)|:\overline f,\overline g\in E(H), |\overline f(x)-\overline g(x)|<1{~}\forall x\in K\}.$$
Possibly, some or both of the values 
$\alpha_y(\overline f),\alpha_y(\overline g)$ from the definition of $a(y,K)$ could be $\pm\infty$. That's why we use the following agreements:
\begin{itemize}
\item[(2.4)] $\infty+\infty=\infty, \infty-\infty=-\infty+\infty=0, -\infty-\infty=-\infty.$
\end{itemize}
Note that $a(y,\varnothing)=\infty$ since $\varphi$ is surjective.

Using that $E(X)$ and $E(H)$ are $QS$-algebras on $X$ and $H$, and following the arguments from Krupski's paper \cite{k} (see also the proofs of \cite[Proposition 1.4]{gu} and \cite[Proposition 3.1]{mp}), one can establish the following claims (for the sake of completeness we provide the proofs):
\begin{claim}
For every $y\in Y$ there is $p,k\in\mathbb N$ such that $\mathcal A_p^k(y)$ contains a finite nonempty subset of $X$.
\end{claim}
This claim follows from the proof of \cite[Proposition 2.1]{k}. Indeed, since $\varphi$ is uniformly continuous there is $p\in\mathbb N$ and a finite set $K\subset X$ such that if $f,g\in E(X)$ and $|f(x)-g(x)|<1/p$ for every $x\in K$, then 
$|\alpha_y(\overline f)-\alpha_y(\overline g)|=|\varphi(f)(y)-\varphi(g)(y)|<1$. Take arbitrary $\overline f,\overline g\in E(H)$ with $|\overline f(x)-\overline g(x)|<1$ for every $x\in K$ and consider the functions
$\overline f_m=\overline f+ \frac{m}{p}(\overline g-\overline f)\in E(H)$ for each $m=0,1,..,p$. Obviously $|f_m(x)-f_{m+1}(x)|<1/p$ for all $x\in K$, so 
$|\alpha_y(\overline f_m)-\alpha_y(\overline f_{m+1})|<1$. Consequently, $|\alpha_y(\overline f)-\alpha_y(\overline g)|\leq\sum_{m=0}^{p-1}|\alpha_y(\overline f_m)-\alpha_y(\overline f_{m+1})|<p$. Because $K$ is finite, there is $k\in\mathbb N$ with $K\subset H_k$. Hence $K\in\mathcal A_p^k(y)$.

Consider the sets $Y_p^k=\{y\in\overline Y:\mathcal A_p^k(y)\neq\varnothing\}$, $p,k\in\mathbb N$.
\begin{claim}
Each $Y_p^k$ is a closed subset of $\overline Y$.
\end{claim}
We use the proof of \cite[Lemma 2.2]{k}. Suppose $y\not\in Y_p^k$. Since $y\in Y_p^k$ iff $H_k\in\mathcal A_p^k(y)$, $H_k\not\in\mathcal A_p^k(y)$. So, there exist $\overline f,\overline g\in E(H)$ with $|\overline f(x)-\overline g(x)|<1$ for all $x\in H_k$ and 
$|\alpha_y(\overline f)-\alpha_y(\overline g)|>p$. Then $V=\{z\in\overline Y: |\alpha_z(\overline f)-\alpha_z(\overline g)|>p\}$ is a neighborhood of $y$ with $V\cap Y_p^k=\varnothing$. 

\begin{claim}
Every set $Y_{p,q}^k=\{y\in Y_p^k: \exists K\in\mathcal A_p^k(y){~}\mbox{with}{~}|K|\leq q\}$, $p,q,k\in\mathbb N$, is closed in $Y_p^k$.
\end{claim}
Following the proof of \cite[Lemma 2.3]{k}, we first show that the set $Z=\{(y,K)\in Y_p^k\times [H_k]^{\leq q}:K\in\mathcal A_p^k(y)\}$ is closed in 
$Y_p^k\times [H_k]^{\leq q}$, where $[H_k]^{\leq q}$ denotes the space of all subsets $K\subset H_k$ of cardinality $\leq q$ endowed with the Vietoris topology. Indeed, if $(y,K)\in Y_p^k\times [H_k]^{\leq q}\backslash Z$, then $K\not\in\mathcal A_p^k(y)$. Hence, $a(y,K)>p$ and there are $\overline f,\overline g\in E(H)$ such that $|\overline f(x)-\overline g(x)|<1$ for all $x\in K$ and $|\alpha_y(\overline f)-\alpha_y(\overline g)|>p$.
Let $U=\{z\in Y_p^k: |\alpha_z(\overline f)-\alpha_z(\overline g)|>p\}$ and $V=\{x\in H_k:|\overline f(x)-\overline g(x)|<1\}$. The set $U\times<V>$ is a neighborhood of $(y,K)$ in $Y_p^k\times [H_k]^{\leq q}$ disjoint from $Z$ (here $<V>=\{F\in [H_k]^{\leq q}: F\subset V\}$).
Since $Y_p^k\times [H_k]^{\leq q}$ is compact and $Y_{p,q}^k$ is the image of $Z$ under the projection $Y_p^k\times [H_k]^{\leq q}\to Y_p^k$, $Y_{p,q}^k$ is closed in $Y_p^k$.

For every $k$ let $Y_k=\bigcup_{p,q}Y_{p,q}^k$. Obviously, $Y_k\subset\{y\in\overline Y:\mathcal A^k(y)\neq\varnothing\}$. Since $H_k\subset H_{k+1}$ for all $k$, the sequence $\{Y_k\}$ is increasing. 
It may happen that $Y_k=\varnothing$ for some $k$, but Claim 1 implies that $Y\subset\bigcup_{k}Y_{k}$. 
\begin{claim}
For every $y\in Y_k$ the family $\mathcal A^k(y)$ is closed under finite intersections and $a(y,K_1\cap K_2)\leq a(y,K_1)+a(y,K_2)$ for all $K_1,K_2\in\mathcal A^k(y)$.
\end{claim} 
We follow the proof of \cite[Lemma 2.5]{k} to show that $K_1\cap K_2\in\mathcal A^k(y)$ for any $K_1,K_2\in\mathcal A^k(y)$. Let $\overline f,\overline g\in E(H)$ with $|\overline f(x)-\overline g(x)|<1$ for all $x\in K_1\cap K_2$ and $U=\{x\in H:|\overline f(x)-\overline g(x)|<1\}$. Take an open set $W$ in $H$ containing $K_1$ with $W\cap K_2\subset U$ and choose $\overline u\in E(H)$ such that $\overline u|K_1=1$, 
$\overline u|(H\backslash W)=0$ and $\overline u(x)\in [0,1]$ for all $x\in H$, see condition $(2.3)$. Then $\overline h=\overline u\cdot(\overline f-\overline g)+\overline g\in E(H)$, $\overline h|K_1=\overline f|K_1$, $\overline h|(K_2\backslash W)=\overline g|(K_2\backslash W)$ and $|\overline h(x)-\overline g(x)|<1$ for $x\in K_2$.
Since $K_1\in\mathcal A^k(y)$ and $\overline h|K_1=\overline f|K_1$, we have $|\alpha_y(\overline f)-\alpha_y(\overline h)|\leq a(y,K_1)<\infty$. Similarly, $K_2\in\mathcal A^k(y)$ and $|\overline h(x)-\overline g(x)|<1$ for $x\in K_2$ imply $|\alpha_y(\overline h)-\alpha_y(\overline g)|\leq a(y,K_2)<\infty$.
Therefore, $$|\alpha_y(\overline f)-\alpha_y(\overline g)|\leq |\alpha_y(\overline f)-\alpha_y(\overline h)|+|\alpha_y(\overline h)-\alpha_y(\overline g)|\leq a(y,K_1)+a(y,K_2).$$ 
Note that the last inequality is true if some of $\alpha_y(\overline f), \alpha_y(\overline h), \alpha_y(\overline g)$ are $\pm\infty$. Indeed, if 
$\alpha_y(\overline f)=\pm\infty$, then $|\alpha_y(\overline f)-\alpha_y(\overline h)|<\infty$ implies $\alpha_y(\overline h)=\pm\infty$. Consequently,
$\alpha_y(\overline g)=\pm\infty$ because $|\alpha_y(\overline h)-\alpha_y(\overline g)|<\infty$. Similarly, if $\alpha_y(\overline h)=\pm\infty$ or 
$\alpha_y(\overline g)=\pm\infty$, then the other two are also $\pm\infty$. 
Hence, $a(y,K_1\cap K_2)\leq a(y,K_1)+a(y,K_2)$, which  means that $K_1\cap K_2\neq\varnothing$ (otherwise $a(y,K_1\cap K_2)=\infty$) and $K_1\cap K_2\in\mathcal A^k(y)$.

Since each family $\mathcal A^k(y)$, $y\in Y_k$, consists of compact subsets of $H_k$, $K(y,k)=\bigcap\mathcal A^k(y)$ is nonempty and compact. 
\begin{claim}
For every $y\in Y_{k}$ the set $K(y,k)$ is a nonempty finite subset of $H_k$ with $K(y,k)\in\mathcal A^k(y)$. Moreover, if $y\in Y$ then there exists $k$ such that $y\in Y_k$ and $K(y,k)\subset X$.
\end{claim}
Let $y\in Y_k$. We already observed that $K(y,k)$ is compact and nonempty. Since $y\in Y_{p,q}^k$ for some $p,q$, $\mathcal A^k(y)$ contains finite sets. Hence,
$K(y,k)$ is also finite and  $K(y,k)\in\mathcal A^k(y)$ because it is an intersection of finitely many elements of $\mathcal A^k(y)$. If $y\in Y$, then by Claim 1, there is $k$ such that $\mathcal A^k(y)$ contains a finite subset of $X$. Since $K(y,k)$ is the minimal element of $\mathcal A^k(y)$, it is also a subset of $X$.

Following \cite{gu}, for every $k$ we define $M^k(p,1)=Y_{p,1}^k$ and $M^k(p,q)=Y_{p,q}^k\backslash Y_{2p,q-1}^k$ if $q\geq 2$. 
\begin{claim}
$Y_k=\bigcup\{M^k(p,q):p,q=1,2,..\}$ and for every $y\in M^k(p,q)$ there exists a unique set $K_{kp}(y)\in\mathcal A^k(y)$ of cardinality $q$ such that $a(y,K_{kp}(y))\leq p$. 
\end{claim}
Since $M^k(p,q)\subset Y^k_{p,q}\subset Y_k$, $\bigcup\{M^k(p,q):p,q=1,2,..\}\subset Y_k$.
If $y\in Y_k$, then $K(y,k)\in\mathcal A^k(y)$ is a finite subset of $H_k$. Assume $|K(y,k)|=q$ and $a(y,K(y,k))\leq p$ for some $p,q$. So, $y\in Y_{p,q}^k$. Moreover $y\not\in Y_{2p,q-1}^k$, otherwise there would be $K\in\mathcal A^k(y)$ with $a(y,K)\leq 2p$ and $|K|\leq q-1$. The last inequality contradicts the minimality of $K(y,k)$. Hence, $y\in M^k(p,q)$ which shows that $Y_k=\bigcup\{M^k(p,q):p,q=1,2,..\}$. 

Suppose $y\in M^k(p,q)$. Then there exists a set $K\in\mathcal A^k(y)$ with $a(y,K)\leq p$ and $|K|\leq q$. Since $y\not\in Y_{2p,q-1}^k$, $|K|=q$. If there exists another $K'\in\mathcal A^k(y)$ with $a(y,K')\leq p$ and $|K'|=q$, then $K\cap K'\neq\varnothing$, $|K\cap K'|\leq q-1$ and, by Claim 4, $a(y,K\cap K')\leq a(y,K)+ a(y,K')\leq 2p$. This means that $y\in Y_{2p,q-1}^k$, a contradiction. Hence, there exists a unique $K_{kp}(y)\in\mathcal A^k(y)$ such that $a(y,K_{kp}(y))\leq p$ and $|K_{kp}(y)|=q$. 

For every $q$ let $[H_k]^q$ denote the set of all $q$-points subsets of $H_k$ endowed with the Vietoris topology.   
\begin{claim}
The map $\Phi_{kpq}:M^k(p,q)\to [H_k]^q$, $\Phi_{kpq}(y)=K_{kp}(y)$, is continuous.
\end{claim}
Because $K_{kp}(y)\subset H_k$ consists of $q$ points for all $y\in M^k(p,q)$, it suffices to show that if $K_{kp}(y)\cap U\neq\varnothing$ for some open $U\subset H$, then there is a neighborhood $V$ of $y$ in $\overline Y$ with $K_{kp}(z)\cap U\neq\varnothing$ for all $z\in V\cap M^k(p,q)$. We can assume that $K_{kp}(y)\cap U$ contains exactly one point $x_0$. 

Let $q\geq 2$, so $K_{kp}(y)=\{x_0,x_1,..,x_{q-1}\}$. Since $y\not\in Y_{2p,q-1}^k$ we have $a(y,K)>2p$, where $K=\{x_1,..,x_{q-1}\}$.
Hence, there are $\overline f, \overline g\in E(H)$ such that $|\overline f(x)-\overline g(x)|<1$ for all $x\in K$ and 
$|\alpha_y(\overline f)-\alpha_y(\overline g)|>2p$. The last inequality implies $\overline f(x_0)\neq \overline g(x_0)$, otherwise $a(y,K_{kp}(y))$ would be greater than $2p$ (recall that $y\in M^k(p,q)$ implies $a(y,K_{kp}(y))\leq p$). So, at least one of the numbers $\overline f(x_0), \overline g(x_0)$ is not zero. Without loss of generality we can assume that $\overline f(x_0)>0$, and let $r$ be a rational number with $\frac{-1+\delta}{\overline f(x_0)}<r<\frac{1+\delta}{\overline f(x_0)}$, where $\delta=\overline f(x_0)-\overline g(x_0)$. Then
$-1<(1-r)\overline f(x_0)-\overline g(x_0)<1$, and choose $\overline h_1\in E(H)$ such that $\overline h_1(x_0)=r$ and $\overline h_1(x)=0$ for all $x\not\in U$. Consider the function $\overline h=(1-\overline h_1)\overline f$. Clearly, $\overline h(x_0)=(1-r)\overline f(x_0)$ and $\overline h(x)=\overline f(x)$ if $x\not\in U$. Hence, 
$\overline h\in E(H)$ and $|\overline h(x)-\overline g(x)|<1$ for all $x\in K_{kp}(y)$. This implies $|\alpha_y(\overline h)-\alpha_y(\overline g)|\leq p$. Then
$$|\alpha_y(\overline f)-\alpha_y(\overline h)|\geq |\alpha_y(\overline f)-\alpha_y(\overline g)|-|\alpha_y(\overline h)-\alpha_y(\overline g)|>2p-p=p.$$
Observe that it is not possible $\alpha_y(\overline f)=\alpha_y(\overline h)=\pm\infty$ because $|\alpha_y(\overline h)-\alpha_y(\overline g)|\leq p$ would imply $\alpha_y(\overline g)=\pm\infty$. Then $|\alpha_y(\overline f)-\alpha_y(\overline g)|=0$, a contradiction.
 
The set $V=\{z\in\overline Y:|\alpha_z(\overline f)-\alpha_z(\overline h)|>p\}$ is a neighborhood of $y$. Since $\overline h(x)=\overline f(x)$ for all $x\not\in U$, 
$K_{kp}(z)\cap U=\varnothing$ for some $z\in V\cap M^k(p,q)$ would imply $|\alpha_z(\overline h)-\alpha_z(\overline f)|\leq p$, a contradiction. Therefore,  
$K_{kp}(z)\cap U\neq\varnothing$ for $z\in V\cap M^k(p,q)$.

If $q=1$, then $K_{kp}(y)=\{x_0\}$ and $K(y,k)=K_{kp}(y)$. So, $H_k\backslash U\not\in\mathcal A^k(y)$ (otherwise $K(y,k)\subset H_k\backslash U$). 
Hence, there exist $\overline f,\overline g\in E(H)$ such that $|\overline f(x)-\overline g(x)|<1$ for all $x\in H_k\backslash U$ and 
$|\alpha_y(\overline f)-\alpha_y(\overline g)|>2p$. Define $\overline h\in E(H)$ as in the previous case and use the same arguments to complete the proof.

Since $Y_{p,q}^k$ are compact subsets of $\overline Y$, each $M^k(p,q)$ is a countable union of compact subsets $\{F_n^k(p,q):n=1,2,..\}$ of $\overline Y$. So, by Claim 6,
$Y_k=\bigcup\{F_n^k(p,q):n,p,q=1,2,..\}$. According to Claim 7, all maps $\Phi_{kpq}^n=\Phi_{kpq}|F_n^k(p,q):F_n^k(p,q)\to [H_k]^q$ are continuous.
Moreover, since $Y\subset\bigcup_kY_k$, $Y\subset\bigcup\{F_n^k(p,q):n,p,q,k=1,2,..\}$. 
\begin{claim}
The fibers of $\Phi_{kpq}^n:F_n^k(p,q)\to [H_k]^q$ are finite. 
\end{claim} 
We follow the arguments from the proof of \cite[Theorem 4.2]{gkm}. Fix $z\in F_n^k(p,q)$ for some $n,p,q, k$ and let $A=\{y\in F_n^k(p,q):K_{kp}(y)=K_{kp}(z)\}$. Since $\Phi_{kpq}^n$ is a perfect map, $A$ is compact. Suppose $A$ is infinite, so it contains a convergent sequence $S=\{y_m\}$ of distinct points. Because $E(\overline Y)$ contains a $QS$-algebra $\Gamma$ on $\overline Y$, for every $y_m$ there exist its neighborhood $U_m$ in $\overline Y$ and a function $\overline g_m\in\Gamma$, $\overline g_m:\overline Y\to [0,2p]$ such that: $U_m\cap S=\{y_m\}$,
$\overline g_m(y_m)=2p$ and $\overline g_m(y)=0$ for all $y\not\in U_m$. Since $\varphi$ is $c$-good, for each $m$ there is $f_m\in E(X)$ with $\varphi (f_m)=\overline g_m|Y=g_m$ and $||f_m||\leq c||g_m||$. So, $||\overline f_m||\leq 2pc$, $m=1,2,..$ and the sequence $\{\overline f_m\}$ is contained in the compact set $[-2pc,2pc]^{H}$. Hence,  $\{\overline f_m\}$ has an accumulation point in  $[-2pc,2pc]^{H}$. This implies the existence of $i\neq j$ such that $|\overline f_i(x)-\overline f_j(x)|<1$ for all $x\in K_{kp}(z)$. Consequently, since $K_{kp}(y_j)=K_{kp}(z)$, $|\alpha_{y_j}(\overline f_j)-\alpha_{y_j}(\overline f_i)|\leq p$. On the other hand, $\alpha_{y_j}(\overline f_j)=\overline{\varphi(f_j)}(y_j)=\overline g_j(y_j)=2p$ and
$\alpha_{y_j}(\overline f_i)=\overline{\varphi(f_i)}(y_j)=\overline g_i(y_j)=0$, so $|\alpha_{y_j}(\overline f_j)-\alpha_{y_j}(\overline f_i)|=2p$, a contradiction.

Now, we can complete the proof of Proposition 2.1. Suppose $H$ has a property $\mathcal P$ satisfying conditions $(a)-(c)$. Then so does $H_k^q$ for each $k,q$ because $H_k^q$ is closed in $H^q$. We claim that the space $[H_k]^q$ also has the property $\mathcal P$. Indeed, let $\mathcal C$ be a countable base of $H_k$.  For every $q$-tuple $(U_1, . .,U_q)$ of elements of $\mathcal C$ with pairwise disjoint closures, the closed set
$$W(U_1, . . , U_q) =\{\{x_1, . . ,x_q\}:x_i\in\overline U_i, i=1,...,q\}\subset [H_k]^q$$ is homeomorphic to the closed subset 
$\overline U_1\times· · ·\times\overline U_q$ of $H_k^q$. Hence, $W(U_1, . . . ,U_q)\in\mathcal P$. Clearly, the space $[H_k]^q$ can be covered by countably many sets of the form $W(U_1, . . , U_q)$, therefore it belongs to $\mathcal P$. 
Finally, since the maps $\Phi_{kpq}^n:F_n^k(p,q)\to [H_k]^q$
are perfect and have finite fibers, each $F_n^k(p,q)$ has the property $\mathcal P$. 
Therefore, by condition $(b)$, $Y_\infty=\bigcup\{F_n^k(p,q):n,p,q,k=1,2,..\}$ has the property $\mathcal P$. 

It remains to show that all powers of $Y_\infty$ also have the property $\mathcal P$. Simplifying the notations, we observed that $Y_\infty=\bigcup_{m=1}^\infty F_m$, where every $F_m$ is a compact set admitting a map $\Phi_m$ with finite fibers onto a compact subset of $H^m$. 
Then for every $k$ we have $$Y_\infty^k=\bigcup_{(m_1,m_2,...,m_k)}F_{m_1}\times F_{m_2}\times...\times F_{m_k}.$$ Consequently, 
$\prod_{i=1}^k\Phi_{m_i}:\prod_{i=1}^kF_{m_i}\to H^{m_1+m_2+..+m_k}$ is a map which fibers are products of $k$-many finite sets. Hence, the fibers of
$\prod_{i=1}^k\Phi_{m_i}$ are finite. Because $H^{m_1+m_2+..+m_k}\in\mathcal P$, so is $\prod_{i=1}^kF_{m_i}$. Finally, by property $(b)$, 
$Y_\infty^k\in\mathcal P$.  
\end{proof}
All definitions below, except that one of $(m-C)$-spaces, can be found in \cite{en}.
 A normal space $X$ is called strongly countable-dimensional if $X$ can be represented as a countable union of closed finite-dimensional subspaces. Recall that a normal space $X$ is weakly infinite-dimensional if for every sequence $\{(A_i,B_i)\}$ of pairs of disjoint closed subsets of $X$ there exist closed sets $L_1,L_2,..$ such that $L_i$ is a partition between $A_i$ and $B_i$ and $\bigcap_iL_i=\varnothing$. A normal space $X$ is a $C$-space if for every sequence $\{\mathcal G_i\}$ of open covers of $X$ there exists a sequence $\{\mathcal H_i\}$ of families of pairwise disjoint open subsets of $X$ such that for 
 $i=1,2,..$ each member of $\mathcal H_i$ is contained in a member of $\mathcal G_i$ and the union $\bigcup_i\mathcal H_i$ is a cover of $X$. The $(m-C)$-spaces, where $m\geq 2$ is a natural number, were introduced by Fedorchuk \cite{vf}: A normal space $X$ is an $(m-C)$-space if for any sequence $\{\mathcal G_i\}$ of open covers of $X$ such that each $\mathcal G_i$ consists of at most $m$ elements, there is a sequence of disjoint open families $\{\mathcal H_i\}$ such that each $\mathcal H_i$ refines $\mathcal G_i$ and $\bigcup_i\mathcal H_i$ is a cover of $X$. The $(2-C)$-spaces are exactly the weakly infinite-dimensional spaces and for every $m$ we have the inclusion $(m+1)-C\subset m-C$. Moreover, every $C$-space is $m-C$ for all $m$.

 It is well known that the class of metrizable strongly countable-di\-men\-sional spaces contains all finite-dimensional metrizable spaces and is contained in the class of metrizable $C$-spaces. The last inclusion follows from the following two facts: (i) every finite-dimensional paracompact space is a $C$-space \cite[Theorem 6.3.7]{en};
(ii) every paracompact space which is a countable union of its closed $C$-spaces is also a $C$-space \cite[Theorem 4.1]{gv}.
 Moreover, every $C$-space is weakly infinite-dimensional \cite[Theorem 6.3.10]{en}.

In the class of $\sigma$-compact metrizable spaces the zero-dimensionality satisfies all conditions $(a)-(c)$, see 
Theorems 1.5.16, 1.2.2 and 1.12.4 from \cite{en}.  The strong countable-dimensionality also satisfies all these conditions, condition $(c)$ follows easily from 
\cite[Theorem 1.12.4]{en}. For $C$-space this follows from mentioned above fact that a countable union of closed compact $C$-spaces is a $C$-space \cite[Theorem 4.1]{gv} and the following results of Hattori-Yamada \cite{hy}: the class of compact $C$-spaces is closed under finite products any perfect preimage of a $C$-space with $C$-space fibers is a $C$-space. Finally, if a $\sigma$-compact space $X$ is weakly infinite-dimensional, then obviously every closed subset of $X$, as well as any countable union of closed subsets of $X$ have the same property, see \cite[Theorem 6.1.6]{en}. Condition $(c)$ follows from the following result of Pol \cite[Theorem 4.1]{po}: If $f:X\to Y$ is a continuous map between compact metrizable spaces such that $Y$ is 
weakly infinite-dimensional and each fiber $f^{-1}(y)$, $y\in Y$, is at most countable, then $X$ is weakly infinite-dimensional.  The validity of conditions $(a)-(c)$ for $(m-C)$-spaces in the class of $\sigma$-compact metrizable spaces follows from the following results \cite{vf} : The $(m-C)$-space property is hereditary with respect to closed subsets, a countable union of closed $(m-C)$-spaces is also $m-C$. Moreover, for $(m-C)$-spaces Krupski \cite[Lemma 4.5]{k1} established an analogue of the cited above Pol's result. Therefore, condition $(c)$ holds for the property $m-C$ in the class of $\sigma$-compact metrizable spaces.  

\section{Uniformly continuous surjections} 
In this section we prove Theorem 1.1 and Theorem 1.3.

For every space $X$ let $\mathcal F_X$ be the class of all maps from $X$ onto second countable spaces. For any two maps $h_1,h_2\in\mathcal F_X$ we write
$h_1\succ h_2$ if there exists a continuous map $\theta:h_1(X)\to h_2(X)$ with $h_2=\theta\circ h_1$. If $\Phi\subset C(X)$ we denote by $\triangle\Phi$ the diagonal product of all $f\in\Phi$. Clearly, $(\triangle\Phi)(X)$ is a subspace of the product $\prod\{\mathbb R_f:f\in\Phi\}$, and 
let $\pi_f:(\triangle\Phi)(X)\to\mathbb R_f$ be the projection.
Following \cite{gu}, we call a set $\Phi\subset C(X)$ {\em admissible} if the family
$\pi(\Phi)=\{\pi_f:f\in\Phi\}$ is a $QS$-algebra on $(\triangle\Phi)(X)$. 
We are using the following facts:
\begin{itemize}
\item[(3.1)] $\dim X\leq n$ if and only if for every $h\in\mathcal F_X$ there exists a $h_0\in\mathcal F_X$ such that $\dim h_0(X)\leq n$ and $h_0\succ h$ \cite{p}.
\item[(3.2)] If $\dim X\leq n$ and $\Phi\subset C(X)$ is countable, then there exists a countable set $\Theta\subset C(X)$ containing $\Phi$ with $\dim(\triangle\Theta)(X)\leq n$. Moreover, it follows from the proof of \cite[Lemma 2.2]{gu} that we can choose $\Theta\subset C^*(X)$ provided that
    $\Phi\subset C^*(X)$.
\item[(3.3)] For every countable $\Phi'\subset C(X)$ there is a countable admissible set $\Phi$ containing $\Phi'$ such that $(\triangle\Phi)(X)$ is homeomorphic to $(\triangle\Phi')(X)$. According to the proof of \cite[Lemma 2.4]{gu}, $\Phi$ could be taken to be a subset of $C^*(X)$ if $\Phi'\subset C^*(X)$. Moreover, we can assume that $\Phi'$ satisfies condition $(2.3)$, so $\pi(\Phi)$ also satisfies that condition.
\item[(3.4)] If $\{\Psi_n\}$ is an increasing sequence of admissible subsets of $C(X)$, then $\Psi=\bigcup_n\Psi_n$ is also admissible, see \cite[Lemma 2.5]{gu}.
    \end{itemize}

We also need the following lemmas:
\begin{lem}
Let $X$ be a $0$-dimensional separable metrizable space and $E(X)$ be a countable subfamily of $C^*(X)$. Then there exists a metrizable $0$-dimensional compactification $\overline X$ of $X$ such that each $f\in E(X)$ can be extended over $\overline X$.
\end{lem}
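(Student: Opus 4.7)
My plan is to bridge two natural but inadequate constructions: the closure of $X$ under the diagonal $\Delta_{f\in E(X)}f\colon X\to\prod_{f\in E(X)}[-\|f\|,\|f\|]$ produces a metrizable compactification extending every $f\in E(X)$ but typically fails $0$-dimensionality, while an embedding $X\hookrightarrow 2^{\omega}$ yields a $0$-dimensional metrizable compactification that need not accommodate the $f$'s. The idea is to exploit strong $0$-dimensionality of $X$ to approximate each $f\in E(X)$ uniformly by step functions supported on finite clopen partitions of $X$, and then form the compactification from those clopen sets together with a clopen base. The main obstacle is precisely the simultaneity of $0$-dimensionality and extendability; routing every $f$ through clopen-partition step approximations is what forces both.

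Since $X$ is separable metrizable with $\dim X=0$, it is strongly $0$-dimensional, so every finite open cover of $X$ refines to a finite clopen partition. For each $f\in E(X)$ and each $k\in\mathbb N$, I would cover $f(X)\subset[-\|f\|,\|f\|]$ by finitely many open intervals of length $1/k$, pull them back to a finite open cover of $X$, and refine this cover to a finite clopen partition $\mathcal P_{f,k}$ of $X$; picking constants $c_V^{f,k}$ from the relevant intervals produces a step function $s_{f,k}=\sum_{V\in\mathcal P_{f,k}}c_V^{f,k}\chi_V$ with $\|s_{f,k}-f\|_{\infty}<1/k$ on $X$. Fix also a countable base $\mathcal B$ of clopen subsets of $X$ and set $\mathcal C=\mathcal B\cup\bigcup_{f,k}\mathcal P_{f,k}$, a countable collection of clopen subsets of $X$.

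Next I would define $e\colon X\to 2^{\mathcal C}$ by $e(x)=(\chi_C(x))_{C\in\mathcal C}$; since $\mathcal B\subset\mathcal C$ is a clopen base of $X$, $e$ is a topological embedding, and its closure $\overline X$ in the Cantor-like cube $2^{\mathcal C}$ is automatically a metrizable $0$-dimensional compactification of $X$. Each step function extends tautologically to $\overline s_{f,k}=\sum_{V\in\mathcal P_{f,k}}c_V^{f,k}\pi_V|_{\overline X}$, where $\pi_V\colon 2^{\mathcal C}\to\{0,1\}$ is the $V$-coordinate projection. The estimate $|\overline s_{f,k}-\overline s_{f,k'}|<1/k+1/k'$ on the dense subset $e(X)$ passes by continuity to $|\overline s_{f,k}-\overline s_{f,k'}|\leq 1/k+1/k'$ on all of $\overline X$, so the sequence $\{\overline s_{f,k}\}_{k}$ is uniformly Cauchy on $\overline X$ and converges to a continuous $\overline f\in C(\overline X)$ that restricts to $f$ on $X$, completing the construction.
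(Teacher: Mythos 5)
Your proof is correct, and it takes a genuinely different route from the paper's. The paper first forms the compactification $X_1$ as the closure of the diagonal of $X\hookrightarrow X_0$ with $\triangle\{f:f\in E(X)\}$ (which extends every $f$ but need not be $0$-dimensional), and then repairs the dimension by invoking $\dim\beta X=\dim X=0$ together with the Marde\v{s}i\'{c} factorization theorem to factor $\beta X\to X_1$ through a $0$-dimensional metrizable compactum $\overline X$; the extensions are obtained by composing with the factoring map. You instead work entirely inside $X$: strong $0$-dimensionality (valid for separable metrizable $X$ with $\dim X=0$) gives finite clopen partitions refining the pullbacks of small interval covers of $f(X)$, hence locally constant uniform $1/k$-approximations $s_{f,k}$ of each $f$; embedding $X$ into $2^{\mathcal C}$ via the characteristic functions of these clopen sets together with a clopen base makes the closure automatically $0$-dimensional, the step functions extend coordinatewise, and the uniform Cauchy estimate (which correctly passes from the dense copy of $X$ to its closure) recovers $\overline f$ as a uniform limit. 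Your argument is more elementary and self-contained, avoiding $\beta X$ and the factorization theorem altogether, but it leans essentially on the boundedness of the members of $E(X)$ for the uniform approximation; the paper's factorization scheme is what adapts directly to Lemma 3.2 (unbounded functions valued in $\overline{\mathbb R}$) and to higher-dimensional analogues, though your construction could also be adapted there by first transporting $\overline{\mathbb R}$ homeomorphically onto $[-1,1]$.
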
 
\begin{proof}
Let $X_0$ be a metrizable compactification of $X$ and for every $f\in E(X)$ denote by $Z_f$ the closure of $f(X)$ in $\mathbb R$. Consider the diagonal product $h$ of the maps $j$ and $\triangle\{f:f\in E(X)\}$, where $j:X\hookrightarrow X_0$ is the embedding. Then the closure $X_1$ of $h(X)$ in the product
$X_0\times\prod_{f\in E(X)}Z_f$ is a compactification of $X$ such that every $f\in E(X)$ can be extended over $X_1$. 
Let $\theta:\beta X\to X_1$ be the map witnessing that $\beta X$ is a compactification
of $X$ larger than $X_1$. Since $\dim\beta X=0$, by the Marde\v{s}i\'{c} factorization theorem \cite[Theorem 3.4.1]{en} there is a metrizable compactum $\overline X$ and maps $\nu:\beta X\to\overline X$ and $\eta:\overline X\to X_1$ such that $\dim\overline X=0$ and $\theta=\eta\circ\nu$. Evidently, $\nu|i(X)$ is a homeomorphism, where $i:X\hookrightarrow\beta X$ is the embedding, so $\overline X$ is a compactification of $X$. Because every $f\in E(X)$ is extendable to a function $\overline f:X_1\to\mathbb R$, the composition $\overline f\circ\eta$ is an extension of $f$ over $\overline X$. 
\end{proof}

\begin{lem}
Let $X$ be a separable metrizable space and $E(X)$ be a countable subfamily of $C(X)$. Then there exists a metrizable compactification $\overline X$ of $X$ such that each $f\in E(X)$ can be extended to a map $\overline f:\overline X\to\overline{\mathbb R}$. Moreover, if $\dim X=0$, then we can assume that
$\dim\overline X=0$. 
\end{lem}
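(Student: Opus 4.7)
My plan is to adapt the proof of Lemma 2.2 to the unbounded setting. The essential observation is that although the functions in $E(X)$ need not be bounded, each is a continuous map from $X$ into $\mathbb{R}\subset\overline{\mathbb R}$, and the extended real line $\overline{\mathbb R}$ is a compact metrizable space (homeomorphic to $[-1,1]$). So one can run the exact same diagonal-product construction as in Lemma 2.2 with $\mathbb R$ replaced by $\overline{\mathbb R}$, and the resulting closure will automatically be compact.

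Concretely, to prove the first assertion, I would first fix any metrizable compactification $X_0$ of $X$ (which exists since $X$ is separable metrizable). Then, viewing each $f\in E(X)$ as a map into $\overline{\mathbb R}$, I would form the diagonal product
\[
h=j\triangle\triangle\{f:f\in E(X)\}:X\to X_0\times\prod_{f\in E(X)}\overline{\mathbb R},
\]
where $j:X\hookrightarrow X_0$. The target is a countable product of metrizable compacta, hence a metrizable compactum, so the closure $X_1$ of $h(X)$ is a metrizable compactification of $X$. For every $f\in E(X)$, the projection of $X_1$ onto the corresponding $\overline{\mathbb R}$-factor restricts to $f$ on (the image of) $X$, yielding the required extension $\overline f:X_1\to\overline{\mathbb R}$.

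For the second assertion, if $\dim X=0$, then $\dim\beta X=0$ by the definition of covering dimension used in the paper. I would then mimic verbatim the Mardešić-factorization step from Lemma 2.2 applied to the map $\theta:\beta X\to X_1$ witnessing that $\beta X$ dominates $X_1$. By \cite[Theorem 3.4.1]{en} there exist a metrizable compactum $\overline X$ with $\dim\overline X=0$ and maps $\nu:\beta X\to\overline X$ and $\eta:\overline X\to X_1$ with $\theta=\eta\circ\nu$. The same argument as in Lemma 2.2 shows that $\nu$ restricted to the canonical copy of $X$ in $\beta X$ is a homeomorphism onto its image, so $\overline X$ is a metrizable $0$-dimensional compactification of $X$. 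Finally, for each $f\in E(X)$, the composition $\overline f\circ\eta:\overline X\to\overline{\mathbb R}$ is the desired extension of $f$.

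I do not anticipate any real obstacle here; the argument is essentially a cosmetic modification of Lemma 2.2. The only point that needs mild care is the verification that $\overline X$ is a bona fide compactification of $X$ (i.e.\ that $\nu$ embeds $X$ as a dense subspace of $\overline X$), but this is identical to the corresponding step in the proof of Lemma 2.2 and uses only the fact that $\eta\circ\nu$ extends the identity on $X$.
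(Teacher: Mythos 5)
Your proposal is correct and follows essentially the same route as the paper: the paper's proof of this lemma simply repeats the construction of the preceding lemma (diagonal product into a compactification $X_0$ times a product of closures of the $f(X)$ taken in $\overline{\mathbb R}$ rather than in $\mathbb R$, followed by Marde\v{s}i\'{c} factorization through $\beta X$ to get $\dim\overline X=0$). The only difference is cosmetic: you use the whole factor $\overline{\mathbb R}$ where the paper uses the closure of $f(X)$ in $\overline{\mathbb R}$, which changes nothing.
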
  
\begin{proof}
The proof is similar to the proof of Lemma 3.1. The only difference is that for every $f\in E(X)$ we consider $Z_f$ to be the closure of $f(X)$ in $\overline{\mathbb R}$.  
\end{proof}  

\textit{Proof of Theorem $1.1$.} Let $T:D_p(X)\to D_p(Y)$ be a uniformly continuous $c$-good surjection. Everywhere below for $f\in C^*(X)$ let $\overline f\in C(\beta X)$ be its extension; similarly if $g\in C(Y)$ then $\overline g\in C(\beta Y,\overline{\mathbb R})$ is the extension of $g$.
According to $(3.1)$, it suffices to prove that for every
$h\in\mathcal F_{Y}$ there is $h_0\in\mathcal F_{Y}$ such that $\dim h_0(Y)=0$ and $h_0\succ h$.
So, we fix $h\in\mathcal F_{Y}$ and let $\overline h:\beta Y\to\overline{h(Y)}$ be an extension of $h$, where $\overline{h(Y)}$ is a metrizable compactification of $h(Y)$. We will construct by induction two sequences $\{\Psi_n\}_{n\geq 1}\subset C(\beta X)$ and $\{\Phi_n\}_{n\geq 1}\subset C(\beta Y,\overline{\mathbb R})$ of countable sets, countable $QS$-algebras $\mathcal A_n$ on $(\triangle\Psi_n)(\beta X)$ and  
countable $QS$-algebras $\Lambda_{n}$ on $Y_n'=(\triangle\Phi_n')(\beta Y)$, where
$\Phi_n'=\{\overline{T(f)}:\overline f\in\Psi_n\}$, 
satisfying the following conditions for every $n\geq 1$:
\begin{itemize}
\item[(3.5)] $\Phi_1\subset C(\beta Y)$ is admissible  and $\triangle\Phi_1\succ\overline h$;
\item[(3.6)] $\Phi_n\subset\Phi_{n+1}=\Phi_n'\cup\{\lambda\circ(\triangle\Phi_n'):\lambda\in\Lambda_{n}\}$;
\item[(3.7)] Each $\Psi_n$ is admissible, $\dim(\triangle\Psi_n)(\beta X)=0$ and $\Psi_n\subset\Psi_{n+1}$;
\item[(3.8)] $\mathcal A_n$ is a $QS$-algebra on $(\triangle\Psi_n)(\beta X)$ satisfying condition $(2.3)$; 
\item[(3.9)] $\Lambda_{n+1}$ contains $\{\lambda\circ\delta_{n}:\lambda\in\Lambda_{n}\}$,  where 
$\delta_{n}:Y'_{n+1}\to Y_{n}'$ is the surjective map generated by the inclusion $\Phi_{n}'\subset\Phi_{n+1}'$;
\item[(3.10)] For every $\overline{g}\in\Phi_n\cap C(\beta Y)$ there is $\overline f_g\in\Psi_{n}$ with $||f_g||\leq c||g||$ and $T(f_g)=g$. 
\end{itemize}

Since $\overline h(\beta Y)$ is a separable metrizable space, 
by $(3.3)$, there is a countable admissible set $\Phi_1\subset C(\beta Y)$ with $\triangle\Phi_1\succ\overline h$. 
Choose a countable set $\Psi_1'\subset C(\beta X)$ such that for every $\overline g\in\Phi_1$ there is $\overline f_g\in\Psi_1'$ with $||f_g||\leq c||g||$ and $T(f_g)=g$. Next, use $(3.2)$ to find  countable $\Theta_1\subset C(\beta X)$ containing $\Psi_1'$ such that $\dim(\triangle\Theta_1)(\beta X)=0$. Finally, by $(3.3)$, we can extend  $\Theta_1$ to a countable admissible set 
$\Psi_1\subset C(\beta X)$ such that $(\triangle\Psi_1)(\beta X)$ is homeomorphic to $(\triangle\Theta_1)(\beta X)$ and the $QS$-algebra $\mathcal A_1=\pi(\Psi_1)$ on $(\triangle\Psi_1)(\beta X)$ satisfies condition $(2.3)$. Evidently, $\dim(\triangle\Psi_1)(\beta X)=0$. 

Suppose $\Phi_k$ and $\Psi_k$ are already constructed for all $k\leq n$. Then $\Phi_{n}'=\{\overline{T(f)}:\overline f\in\Psi_n\}$ is a countable set in $C(\beta Y,\overline{\mathbb R})$. Because $\Psi_{n-1}\subset\Psi_n$, $\Phi_{n-1}'\subset\Phi_n'$. So, there is a surjective map $\delta_{n-1}: Y_n'\to\ Y_{n-1}'$, see $(3.9)$. Since $\triangle\Phi_n'\succ\triangle\Phi_{n-1}'$ we have $\delta_{n-1}(y)=\triangle\Phi_{n-1}'((\triangle\Phi_n')^{-1}(y))$ for all $y\in Y_n'$.
 Choose a countable $QS$-algebra $\Lambda_{n}$ on $Y_n'$ containing the family $\{\lambda\circ\delta_{n-1}:\lambda\in\Lambda_{n-1}\}$ 
 and let $\Phi_{n+1}=\Phi_n'\cup\{\lambda\circ(\triangle\Phi_n'):\lambda\in\Lambda_{n}\}$.
Next, take a countable set 
 $\Psi_{n+1}'\subset C(\beta X)$ containing $\Psi_n$ such that for every $\overline{g}\in\Phi_{n+1}\cap C(\beta Y)$ there is $\overline f_g\in\Psi_{n+1}'$ with $||f_g||\leq c||g||$ and $T(f_g)=g$.
Then, by $(3.2)$ there is countable $\Theta_{n+1}\subset C(\beta X)$ containing  $\Psi_{n+1}'$ with $\dim(\triangle\Theta_{n+1})(\beta X)=0$. Finally, according to $(3.3)$, we extend $\Theta_{n+1}$ to a countable admissible set $\Psi_{n+1}\subset C(\beta X)$ such that $(\triangle\Psi_{n+1})(\beta X)$ is homeomorphic to $(\triangle\Theta_{n+1})(\beta X)$ and the $QS$-algebra $\mathcal A_{n+1}=\pi(\Psi_{n+1})$ on $(\triangle\Psi_{n+1})(\beta X)$ satisfies condition $(2.3)$. This completes the induction.

Let $\Psi=\bigcup_n\Psi_n$, $X_0=(\triangle\Psi)(X)$, $\overline X_n=(\triangle\Psi_n)(\beta X)$ and $\overline X_0=(\triangle\Psi)(\beta X)$. Similarly, let $\Phi=\bigcup_n\Phi_n$, $Y_0=h_0(Y)$ and 
$\overline Y_0=(\triangle\Phi)(\beta Y)$, where $h_0=(\triangle\Phi)|Y$. 
Both $\Psi$ and $\Phi$ are countable and $\Psi$ is an admissible subset of $C(\beta X)$, see $(3.4)$. Hence, 
the family $E(\overline X_0)=\{\pi_{\overline f}:\overline f\in\Psi\}$ is a countable $QS$-algebra on $\overline X_0$. 
Moreover, the family $E(Y_0)=\{\pi_{\overline g}|Y_0:\overline g\in\Phi\}$
is extendable over $\overline Y_0$.
Since $\Psi_n\subset\Psi_{n+1}$ for every $n$, there are maps $\theta_n^{n+1}:\overline X_{n+1}\to \overline X_n$. Because $\Psi=\bigcup_n\Psi_n$, the space $\overline X_0$ is the limit of the inverse sequence $S_X=\{\overline X_n,\theta_n^{n+1}\}$ with $\dim\overline X_n=0$ for all $n$. Hence, $\overline X_0$ is also $0$-dimensional, see \cite[Theorem 3.4.11]{en}. Observe also that $E(\overline X_0)=\bigcup_{n=1}^\infty\{h\circ\theta_n:h\in\mathcal A_n\}$.

Let us show that $E(\overline X_0)$ ia a $QS$-algebra satisfying condition $(2.3)$. Because $E(\overline X_0)$ is the union of the increasing sequence of the families $\{h\circ\theta_n:h\in\mathcal A_n\}$ and each $\mathcal A_n$ is a $QS$-algebra, $E(\overline X_0)$ is closed under sums, multiplications and multiplications by rational numbers. It remains to show that for every $x\in\overline X_0$ and every its neighborhood $U\subset\overline X_0$ there is 
$\overline f\in E(\overline X_0)$ such that $\overline f(x)=1$ and $\overline f(\overline X_0\backslash U)=0$. But that follows from the proof of the more general condition $(2.3)$.
To show that $E(\overline X_0)$ satisfies $(2.3)$,
take a compact set $K\subset\overline X_0$ and an open set $W\subset\overline X_0$ containing $K$. Because $\overline X_0$ is the limit of the inverse sequence $S_X=\{\overline X_n,\theta_n^{n+1}\}$,
there are $n$, a compact set $K_n\subset\overline X_n$ and an open set $W_n\subset\overline X_n$ containing $K_n$ such that 
$K\subset\theta_n^{-1}(K_n)\subset\theta_n^{-1}(W_n)\subset W$, where $\theta_n:\overline X_0\to\overline X_n$ denotes the $n$-th projection in $S_X$. Then, according to condition $(3.8)$, there is $f_n\in\mathcal A_n$ with $f_n|K_n=1$, $f_n|(\overline X_n\backslash W_n)=0$ and $f_n:\overline X_n\to [0,1]$.
Finally, observe that the function $\overline f=f_n\circ\theta_n:\overline X_0\to [0,1]$ belongs to $E(\overline X_0)$ and $\overline f$ satisfies the conditions $\overline f|K=1$ and 
$\overline f|(\overline X_0\backslash W)=0$.

It follows from the construction that $\Phi=\{\overline{T(f)}:\overline f\in\Psi\}$ and for every $\overline g\in\Phi\cap C(\beta Y)$ there is $\overline f_g\in\Psi$ with $T(f_g)=g$ and 
$||f_g||\leq c||g||$. Observe that for all $\overline f\in\Psi$ and $\overline g\in\Phi$ we have $f=\pi_{\overline f}\circ(\triangle\Psi)|X$ and $g=\pi_{\overline g}\circ (\triangle\Phi)|Y$. Therefore, there is a surjective map $\varphi:E_p(X_0)\to E_p(Y_0)$ defined by $\varphi(\pi_f)=\pi_{T(f)}$,
where $\pi_f$ and $\pi_g$ denote, respectively, the functions $\pi_{\overline f}|X_0$ and $\pi_{\overline g}|Y_0$. 
 Moreover $||f||=||\pi_f||$ and $||g||=||\pi_g||$ for all $\overline f\in\Psi$ and $\overline g\in\Phi\cap C(\beta Y)$. 
This implies that $\varphi$ is a $c$-good surjection. 

Let's show that $\varphi$ is uniformly continuous. Suppose
$$V=\{\pi_g:|\pi_g(y_i)|<\varepsilon{~}\forall i\leq k\}$$ is a neighborhood of the zero function in $E_p(Y_0)$. Take points $\overline y_i\in Y$ with $h_0(\overline y_i)=y_i$, $i=1,2,..,k$, and let 
$\widetilde V=\{g\in D_p(Y):|g(\overline y_i)|<\varepsilon{~}\forall i\leq k\}$. Since $T$ is uniformly continuous, there is a neighborhood 
$$\widetilde U=\{f\in D_p(X):|f(\overline x_j)|<\delta{~}\forall j\leq m\}$$ of the zero function in $D_p(X)$ such that $f-f'\in\widetilde U$ implies $T(f)-T(f')\in \widetilde V$ for all $f,f'\in D_p(X)$. Let $x_j=(\triangle\Psi)(\overline x_j)$ and 
$$U=\{\pi_f:|\pi_f(x_j)|<\delta{~}\forall j\leq m\}.$$ Obviously, $\pi_f-\pi_{f'}\in U$ implies $f-f'\in\widetilde U$. Hence, $T(f)-T(f')\in\widetilde V$, which yields $\varphi(\pi_f)-\varphi(\pi_{f'})\in V$. 

Finally, we can show that $E(\overline Y_0)$ contains a $QS$-algebra on $\overline Y_0$. Since $\Lambda_n\subset C(Y_n')$ is a $QS$-algebra on $Y_n'$, it separates the points and the closed sets in $Y_n'$. So, $Y_n'$ is homeomorphic to $(\triangle\Lambda_n)(Y_n')$. This implies that $Y_{n+1}=(\triangle\Phi_{n+1})(\beta Y)$ is homeomorphic to $Y_n'$. Therefore, $\Lambda_n$ can be considered as a $QS$-algebra on $Y_{n+1}$. 
On the other hand $\overline Y_0$ is the limit of the inverse sequence $S_Y=\{Y_{n+1},\gamma_{n+1}^{n+2},n\geq 1\}$, where $\gamma_{n+1}^{n+2}:Y_{n+2}\to Y_{n+1}$ is the surjective map generated by the inclusion $\Phi_{n+1}\subset\Phi_{n+2}$. According to condition $(3.9)$, we can also assume that $\Lambda_{n+1}$ contains the family $\{\lambda\circ\gamma_{n+1}^{n+2}:\lambda\in\Lambda_n\}$. 
Denote by $\gamma_{n+1}:\overline Y_0\to Y_{n+1}$ the projections 
in $S_Y$, and let $\Gamma_n=\{\lambda\circ\gamma_{n+1}:\lambda\in\Lambda_n\}$. Then $\{\Gamma_n\}_{n\geq 1}$ is an increasing sequence of countable families and $\Gamma_n\subset E(\overline Y_0)=\{\pi_{\overline g}:\overline g\in\Phi\}$ for every $n$. We claim that $\Gamma=\bigcup_n\Gamma_n$ is a $QS$-algebra on $\overline Y_0$. Indeed, $\Gamma$ is closed under addition, multiplication and multiplication by rational numbers. Because for every $y\in\overline Y_0$ and its neighborhood $V\subset\overline Y_0$ there is $n$ and an open set $V_n\subset Y_{n+1}$ such that $y_n=\gamma_{n+1}(y)\in V_n$ and 
 $\gamma_{n+1}^{-1}(V_n)\subset V$, there exists $\lambda\in\Lambda_n$ with $\lambda(y_n)=1$ and $\lambda(Y_{n+1}\backslash V_n)=0$. Then $g=\lambda\circ\gamma_{n+1}\in\Gamma$, $g(y)=1$ and $g(\overline Y_0\backslash V)=0$.   
 
To prove Theorem 1.1,  we apply Proposition 2.1 with $H=\overline X_0$ to find a $\sigma$-compact set $Y_\infty\subset\overline Y_0$ containing $Y_0$ with $\dim Y_\infty=0$.
Therefore, by \cite[Proposition 1.2.2]{en}, $\dim Y_0=0$. $\Box$

\begin{pro}\cite{l}
For every linear continuous surjective map $T:C_p^*(X)\to C_p^*(Y)$ there is $c>0$ such that $T$ is $c$-good.
\end{pro}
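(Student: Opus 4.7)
The plan is to upgrade $T$ from pointwise-topology continuity to norm continuity between the Banach spaces $(C^*(X),||\cdot||)$ and $(C^*(Y),||\cdot||)$, and then read off the $c$-good property from the open mapping theorem. The route has two steps: first the closed graph theorem to get norm-boundedness of $T$, and second the open mapping theorem to invert $T$ with controlled norm.

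For the closed-graph step, I would take sequences $f_n\to f$ in $C^*(X)$ and $T(f_n)\to g$ in $C^*(Y)$, both in supremum norm, and argue that $T(f)=g$. Since $||\cdot||$ dominates the pointwise topology, $f_n\to f$ also in $C_p^*(X)$, and the hypothesized pointwise continuity of $T:C_p^*(X)\to C_p^*(Y)$ yields $T(f_n)\to T(f)$ pointwise on $Y$. On the other hand $T(f_n)\to g$ in norm, hence pointwise, so $T(f)=g$ by uniqueness of pointwise limits in $\mathbb R^Y$. Thus the graph of $T$ is closed in the Banach product $C^*(X)\times C^*(Y)$, and the closed graph theorem gives that $T$ is bounded with respect to the supremum norms.

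Being a bounded linear surjection between Banach spaces, $T$ is an open map by the open mapping theorem, so there is a constant $c>0$ with $T(\{f\in C^*(X):||f||\le c\})$ covering the closed unit ball of $C^*(Y)$. Homogeneity then supplies, for every $g\in C^*(Y)$, a preimage $f\in C^*(X)$ with $T(f)=g$ and $||f||\le c\,||g||$, which is precisely the $c$-good property. The only substantive step in the argument is the closed-graph verification, and it presents no real obstacle once one observes that norm convergence refines pointwise convergence; the rest is a direct citation of the two classical Banach space theorems. The proposition is thus essentially a functional-analytic consequence of the fact that the pointwise topology on $C^*(X)$ is coarser than, but compatible with, its Banach norm structure.
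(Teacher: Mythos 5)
Your argument is correct and follows essentially the same route as the paper: the closed graph theorem (verified exactly as you do, via norm convergence refining pointwise convergence) upgrades $T$ to a bounded operator between the sup-norm Banach spaces, and then the open mapping theorem yields the $c$-good constant. The paper phrases the second step by factoring through the quotient $C^*_u(X)/\ker T$ and bounding $\|T_0^{-1}\|$, with a factor $2$ to pass from the quotient-norm infimum to an actual representative, which is the same slack you need to pass from the open unit ball in the open mapping theorem to arbitrary $g$; this difference is purely cosmetic.
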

\begin{proof}
By the Closed Graph Theorem, $T$ considered as a map between the Banach spaces $C^*_u(X)$ and $C^*_u(Y)$, both equipped with the sup-norm, is continuous. 
Then $T$ induced a linear isomorphism $T_0$ between $C^*_u(X)/K$ and $C^*_u(Y)$, where $K$ is the kernel of $T$. So, for every $g\in C_u^*(Y)$ we have 
$||T_0^{-1}(g)||\leq ||T_0^{-1}||\cdot||g||$. Because  $$||T_0^{-1}(g)||=\inf\{||f-h||:h\in K\},$$ where $f\in C_u^*(X)$ with $T(f)=g$, there exists $h_g\in K$ such that 
$||f-h_g||\leq 2||T_0^{-1}(g)||$. Hence, $||f-h_g||\leq 2||T_0^{-1}||\cdot||g||$. Since $T(f-h_g)=T(f)=g$, we obtain that $T$ is $c$-good with $c=2||T_0^{-1}||$.
\end{proof}

\textit{Proof of Theorem $1.3$.} Following the proof of Theorem $1.1$, we construct two sequences $\{\Psi_n\}_{n\geq 1}\subset C(\beta X)$ and $\{\Phi_n\}_{n\geq 1}\subset C(\beta Y,\overline{\mathbb R})$ of countable sets and countable $QS$-algebras $\mathcal A_n$ on $\triangle\Psi_n(\beta X)$ and  $\Lambda_{n}$ on $Y_n'=(\triangle\Phi_n')(\beta Y)$ satisfying the conditions $(3.5)-(3.10)$ except $(3.7)$. Because $X$ and $Y$ are separable metrizable spaces, we can choose countable sets $\Psi_1$ and $\Phi_1$ such that  
$(\triangle\Phi_1)|Y$ and $(\triangle\Psi_1)|X$ are homeomorphisms.

Then, following the notations from the proof of Theorem 1.1, we have that $X_0$ and $Y_0$ are homeomorphic to $X$ and $Y$, respectively. Moreover,
there exists a uniformly continuous $c$-good surjection $\varphi: E_p(X_0)\to E_p(Y_0)$ such that $E(\overline X_0)$ is a $QS$-algebra on $\overline X_0$
satisfying condition $(2.3)$, 
$E(\overline Y_0)\subset C(\overline Y_0,\overline{\mathbb R})$ and $E(\overline Y_0)$ contains a countable $QS$-algebra $\Gamma$ on $\overline Y_0$.
According to the proof of Proposition 2.1 with $H=X_0$, there is a $\sigma$-compact set $Y_\infty\subset\overline Y_0$ containing $Y_0$ which a countable union of closed sets $F\subset Y_\infty$ such that $F$ admits a map with finite fibres into a finite power of $X_0$. 
Let $Y_0=\bigcup_m Y_m$  with each $Y_m$ being compact. Then $Y_0$ can also be represented as a countable union of compact sets each admitting a map with finite powers into a finite power of $X_0$.
Now we apply the following fact which was actually used in the proof of Proposition 2.1: Assume all powers of a $\sigma$-compact metrizable space $P$ have a property $\mathcal P$ satisfying conditions $(a)-(c)$. If a metrizable space $Z$ is the union of countably many compact sets
$Z_n$ such that each $Z_n$ admits a map with finite fibres into a finite power $P^{k_n}$, then all finite powers of $Z$ also have the property $\mathcal P$. 
Since all finite powers of $X_0$ have the property $\mathcal P$, where $\mathcal P$ is either weakly infinite-dimensionality or the $(m-C)$-space property,
by mentioned above fact, all finite powers of $Y_0$ are either weakly infinite-dimensional or have the $(m-C)$-space property. 
$\Box$

\section{Proof of Theorem 1.4}
We consider topological properties $\mathcal P$ of separable metrizable spaces satisfying conditions $(b)$ from the introduction section plus the following three:
\begin{itemize}
\item[$(a')$] If $X\in\mathcal P$, then $F\in\mathcal P$ for every subset $F\subset X$;
\item[$(c')$] If $f:X\to Y$ is a perfect map between metrizable spaces with $0$-dimensional fibers and $Y\in\mathcal P$, then $X\in\mathcal P$;
\item[$(d')$] $\mathcal P$ is closed under finite products.
\end{itemize}
The $0$-dimensionality, the countable-dimensionality and the strong countable dimensionality satisfy these conditions, see \cite{en}.

If $E(X)\subset C(X)$ is a $QS$-algebra on $X$, then the family $LE(X)$ of all finite linear combinations $\sum_{i=1}^k\lambda_i\cdot f_i$ with $f_i\in E(X)$ and $\lambda_i\in\mathbb R$ is called the {\em linear hull} of $E(X)$.
\begin{pro}
Let $X$ and $Y$ be separable metrizable spaces and $E(X)\subset C(X)$ be a countable $QS$-algebra on $X$ and $E(Y)\subset C(Y)$ be a countable family. Suppose there are metrizable compactifications $\overline X$ and $\overline Y$ of $X$ and $Y$ and a countable base $\mathcal B$ of $\overline X$ such that: 
\begin{itemize}
\item Every $f\in E(X)$ can be extended to a map $\overline f:\overline X\to\overline{\mathbb R}$ and for every finite open cover $\gamma=\{U_i:i=1,2,..,k\}$ of $\overline X$ with elements from $\mathcal B$ there exists a partition of unity $\{\overline f_i:i=1,2,.,k\}$ subordinated to $\gamma$ with $f_i\in E(X)$;
\item Every $g\in E(Y)$ can be extended to a map $\overline g:\overline Y\to\overline{\mathbb R}$ and the set of all real-valued elements of $E_p(\overline Y)=\{\overline g:g\in E(Y)\}$ is dense in $C_p(\overline Y)$;
\item For every compact set $K\subset\overline X$ and every open set $W\subset\overline X$ containing $K$ there is $f\in E(X)$ such that $\overline f(K)=1$, $\overline f(\overline X\backslash W)=0$ and $f(x)\in [0,1]$ for all $x\in X$.    
\end{itemize}    
If $\overline X$ has a property $\mathcal P$ satisfying conditions $(a'),(b)$, $(c')$ and $(d')$, and
$\varphi: E_p(X)\to E_p(Y)$ is a linear continuous surjection such that $\varphi$ can be continuously extended over $LE_p(X)$, then $Y\in\mathcal P$.
\end{pro}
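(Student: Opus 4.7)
The plan is to adapt the support-based stratification of Proposition~2.1 to the linear setting. Because $\varphi$ extends continuously as a linear map $\widetilde\varphi$ over $LE_p(X)$, for every $y\in Y$ the evaluation $\sigma_y\colon f\mapsto\widetilde\varphi(f)(y)$ is a continuous linear functional on $LE_p(X)$. Since the partition-of-unity and Urysohn hypotheses on $E(X)$ imply that $LE(X)$ is dense in $C_p(X)$, this functional extends uniquely to $C_p(X)$ and is therefore a finitely supported signed measure: there is a unique finite set $\mathrm{supp}(y)=\{x_1(y),\dots,x_{k(y)}(y)\}\subset X$ with nonzero coefficients $\lambda_i(y)\in\mathbb R$ so that $\widetilde\varphi(f)(y)=\sum_i\lambda_i(y)f(x_i(y))$ for every $f\in LE(X)$.

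Next I would stratify $Y$ by support-size and coefficient bounds, setting
\[
Y_{n,q}=\{y\in Y: |\mathrm{supp}(y)|=q,\ 1/n\le|\lambda_i(y)|\le n\text{ for all }i\}.
\]
A linear analogue of the oscillation function $a(y,K)$ from Section~2, together with the Urysohn condition of the third bullet, shows that each cumulative stratum $\bigcup_{n'\le n,\,q'\le q}Y_{n',q'}$ is closed in $Y$; one then verifies, by a bump-function argument parallel to Claim~7 of Proposition~2.1, that the assignment $\Phi_{n,q}\colon Y_{n,q}\to[\overline X]^q$, $y\mapsto\mathrm{supp}(y)$, is continuous. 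Since $\overline X\in\mathcal P$, conditions $(a')$ and $(d')$ let the hyperspace $[\overline X]^q$ inherit $\mathcal P$ by covering it with countably many closed sets homeomorphic to products $\overline U_1\times\cdots\times\overline U_q\subset\overline X^q$, exactly as in the final paragraph of the proof of Proposition~2.1.

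For the fiber analysis, the density of $\{\overline g:g\in E(Y)\}$ in $C_p(\overline Y)$ implies that $E(Y)$ separates points, hence $y\mapsto\sigma_y$ is injective on $Y$. Therefore the fiber $\Phi_{n,q}^{-1}(F)$ embeds continuously into the compact subset $\{\vec\lambda\in\mathbb R^q:1/n\le|\lambda_i|\le n\}$ via the coefficient map, which makes $\Phi_{n,q}$ a perfect map. The combinatorial rigidity supplied by the partition-of-unity hypothesis on $E(X)$---which lets one replace generic test functions by ones built from finitely many values on a prescribed finite clopen refinement of a cover of $\overline X$---forces the image of each fiber under the coefficient map to be $0$-dimensional in $\mathbb R^q$. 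Applying $(c')$ gives $Y_{n,q}\in\mathcal P$, and $(b)$ then yields $Y=\bigcup_{n,q}Y_{n,q}\in\mathcal P$.

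The most delicate step will be establishing the $0$-dimensionality of the fibers of $\Phi_{n,q}$: the partition-of-unity assumption on $E(X)$ is the nontrivial ingredient replacing $c$-goodness from Theorem~1.1, and without it the coefficient map could sweep out a positive-dimensional subset of $\mathbb R^q$, obstructing the application of $(c')$. A secondary technicality is the uniqueness of the extension of $\sigma_y$ to $C_p(X)$; this rests on the density of $LE(X)$ in $C_p(X)$, which is precisely what the three bulleted conditions on $E(X)$ are engineered to guarantee.
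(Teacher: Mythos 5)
Your overall architecture (finite supports of the evaluation functionals, stratification by support size and coefficient bounds, continuity of the support map into $[\overline X]^q$, then $(c')$ and $(b)$) matches the paper's, and your derivation of the finite-support representation for $y\in Y$ via the unique extension of $\sigma_y$ to $C_p(X)$ is a legitimate substitute for the paper's Claim 12. However, there are two genuine gaps. First, perfectness of $\Phi_{n,q}\colon Y_{n,q}\to[\overline X]^q$ does not follow from what you prove: a continuous injection of a fiber into a compact subset of $\mathbb R^q$ gives neither compactness of the fiber nor closedness of the map, and since $Y$ is only separable metrizable your strata $Y_{n,q}\subset Y$ need not be compact or even $\sigma$-compact. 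The paper resolves this by defining the functionals $l_y$ for \emph{all} $y\in\overline Y$ (with values in $\overline{\mathbb R}$), proving the sets $Y_{p,q}\subset\overline Y$ are closed (Claim 13), and restricting the support map to compact sets $F_n(p,q)=\overline{Y\cap F_n'(p,q)}\subset\overline Y$, where perfectness is automatic; condition $(a')$ (heredity for arbitrary subsets) then brings the conclusion back down to $Y\cap F_n(p,q)$. This extension to $\overline Y$ is not cosmetic: one must show (the paper's Claim 14) that points of $\overline Y\setminus Y$ lying in a fiber over a support coming from $Y$ still admit a finite linear representation with real coefficients, which is where the Urysohn-type third bullet and the locality statement (Claim 10) are used.

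Second, the step you yourself flag as the most delicate --- $0$-dimensionality of the fibers --- is both unproved and attributed to the wrong hypothesis. The partition-of-unity condition on $E(X)$ is what makes the support theory work (nonemptiness of $\mathrm{supp}(l_y)$ and the fact that $l_y(\overline f)$ depends only on $\overline f$ near the support); it does not control the fibers. The fibers are handled by the density hypothesis on $E(Y)$: for a fiber $A(z)$ over a $q$-point support, every restriction $\overline g|A(z)$ with $g\in E(Y)$ factors as $\sum_{i=1}^q\lambda_i(\cdot)\,\overline f(x_i(z))$, i.e.\ through the $q$-dimensional space $C_p(S_{p,q}(z))$, so $C_p(A(z))$ contains a dense linear subspace of dimension at most $q$; being finite-dimensional, that subspace is closed, hence equals $C_p(A(z))$, which forces $|A(z)|\le q$ (the paper's Claims 15--16). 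Without this argument your application of $(c')$ is unsupported, so the proposal as written does not close.
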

\begin{proof}

Let $E(\overline X)=\{\overline f:f\in E(X)\}$. 
Every $y\in\overline Y$ generates a map $l_y:E(\overline X)\to\overline{\mathbb R}$, $l_y(\overline f)=\overline{\varphi(f)}(y)$. 
Assuming the equalities from $(2.4)$, for any $\overline f_1,\overline f_2\in E(\overline X)$ and $x\in\overline X$, we can write $\overline f_1(x)+\overline f_2(x)$ but not always $\overline f_1+\overline f_2=\overline{f_1+f_2}$. Also, if $\overline f_1+\overline f_2\in E(\overline X)$, it is possible that
$l_y(\overline f_1+\overline f_2)\neq l_y(\overline f_1)+l_y(\overline f_2)$. 
If $\lambda\in\mathbb R\backslash\{0\}$ and $\overline{\lambda\cdot f}\in E(\overline X)$, then 
$l_y(\overline{\lambda\cdot f})=\lambda\cdot l_y(\overline f)$ (here, $\lambda\cdot (\pm\infty)=\pm\infty$ if $\lambda>0$ and 
$\lambda\cdot (\pm\infty)=\mp\infty$ if $\lambda<0$). In case $\lambda=0$, we have $l_y(\overline{0\cdot f})=0$. More general, if $\overline h\in C(\overline X)$ such that $h\cdot f\in E(X)$ for some $f\in E(X)$, then $l_y(\overline{h\cdot f})$ is well defined.

The {\em support of 
$l_y$}, $y\in\overline Y$, is defined to be the set $supp(l_y)$ of all $x\in\overline X$ satisfying the following condition \cite{vv}: for every neighborhood $U\subset\overline X$ of $x$ 
there is $f\in E(X)$ such that $\overline f(\overline X\backslash U)=0$ and $l_y(\overline f)\neq 0$. Obviously, $supp(l_y)$ is closed in $\overline X$. 
\begin{claim}
$supp(l_y)$ is non-empty for all $y\in\overline Y$.
\end{claim}
Indeed, because the set of real-valued functions from $E(\overline Y)$ is dense in $C_p(\overline Y)$, for every $y\in\overline Y$ there is $g\in E(\overline Y)$ with $g(y)\neq 0$. Then take $f_y\in E(X)$ such that $\varphi(f_y)=g|Y$, so $l_y(\overline f_y)=g(y)$. If $supp(l_y)=\varnothing$, every $x\in\overline X$ has a neighborhood  $V_x$ such that $l_y(\overline f)=0$ for any $f\in E(X)$ with $\overline f(\overline X\backslash V_x)=0$. Passing to smaller neighborhoods, we can assume that $V_x\in\mathcal B$ for all $x$. Hence, there is a finite open cover $\gamma=\{V_{x_1},..,V_{x_k}\}$ of $\overline X$ and a partition of unity $\mu=\{\overline h_1,..,\overline h_k\}\subset E(\overline X)$ subordinated to $\gamma$. Then $f_y\cdot h_i\in E(X)$ and $\overline{f_y\cdot h_i}|(\overline X\backslash V_{x_i})=0$ which implies $l_y(\overline{f_y\cdot h_i})=0$ for all $i$. Because $\mu$ is a partition of unity, 
$\overline f_y=\sum_{i=1}^k\overline{f_y\cdot h_i}$. So, $l_y(\overline f_y)=\sum_{i=1}^kl_y(\overline{f_y\cdot h_i})=0$, a contradiction. 

\begin{claim}
Let $U\subset\overline X$ be a neighborhood of $supp(l_y)$ and $f,g\in E(X)$ with $\overline f|U=\overline g|U$. Then $l_y(\overline f)=l_y(\overline g)$.
In particular, $\overline f(U)=0$ implies $l_y(\overline f)=0$.
\end{claim}
First, let $\overline f(U)=0$ for some $\overline f\in E(\overline X)$.
We can assume that $U$ is a finite union of open sets $V_i$ from $\mathcal B$, $i=1,..,k$. 
Every $x\in\overline X\backslash U$ has a neighborhood $V_x$ such that $l_y(\overline g)=0$ for any $g\in E(X)$ with $\overline g(\overline X\backslash V_x)=0$. As in the proof of Claim 9, we can assume that $V_x\in\mathcal B$, and choose another neighborhood $U_x\in\mathcal B$ with $\overline U_x\subset V_x$. Take a finite open cover $\{U_{x_1},..,U_{x_m}\}$ of $\overline X\backslash U$. Then 
$\gamma=\{V_1,..,V_k,U_{x_1},..,U_{x_m}\}$ is an open cover of $\overline X$ consisting of elements from $\mathcal B$. Hence, there exits a partition of unity $\{\overline h_1,.,\overline h_k,\overline\theta _1,..,\overline\theta_m\}\subset E(\overline X)$ subordinated to $\gamma$. Then $h_i\cdot f, \theta_j\cdot f\in E(X)$ for all $i,j$ and
$f=\sum_{i=1}^kh_i\cdot f+\sum_{j=1}^m\theta_j\cdot f$. Take a sequence $\{y_n\}\subset Y$ with $\lim_n y_n=y$. 
So, $\varphi(f)(y_n)=\sum_{i=1}^k\varphi(h_i\cdot f)(y_n)+\sum_{j=1}^m\varphi(\theta_j\cdot f)(y_n)$.
Observe that $(h_i\cdot f)(x)=0$ for all $x\in X$, so $\varphi(h_i\cdot f)$ is the zero function on $Y$ and $\varphi(h_i\cdot f)(y_n)=0$, $i=1,..,k$. 
Hence, $\varphi(f)(y_n)=\sum_{j=1}^m\varphi(\theta_j\cdot f)(y_n)$.
On the other hand, $(\theta_j\cdot f)(x)=0$ for all $x\in X\backslash U_{x_j}$. So, $(\overline{\theta_j\cdot f})(x)=0$ for all $x\in\overline X\backslash V_{x_j}$ 
because $\overline X\backslash V_{x_j}\subset\overline{X\backslash U_{x_j}}$. This implies that $l_y(\overline{\theta_j\cdot f})=0$ for all $j=1,..,m$. 
Since, $l_y(\overline{\theta_j\cdot f})=\lim_n\varphi(\theta_j\cdot f)(y_n)=0$, we have
$$l_y(\overline f)=\lim_n\varphi(f)(y_n)=\lim_n\sum_{j=1}^m\varphi(\theta_j\cdot f)(y_n)=0.$$
Suppose now that $\overline f|U=\overline g|U$ for some $f,g\in E(X)$. Then $f(x)-g(x)=0$ for all $x\in U\cap X$. Consequently, $(\overline{f-g})(x)=0$ for all $x\in U$ and, according to the previous paragraph, $l_y(\overline{f-g})=0$. Hence, for every sequence $\{y_n\}\subset Y$ converging to $y$ we have 
$$l_y(\overline{f-g})=\lim_n\varphi(f-g)(y_n)=\lim_n\varphi(f)(y_n)-\lim_n\varphi(g)(y_n)=l_y(\overline f)-l_y(\overline g).$$

\begin{claim}
If $supp(l_{y_0})\cap U\neq\varnothing$ for some open $U\subset\overline X$ and $y_0\in\overline Y$, then $y_0$ has a neighborhood $V\subset\overline Y$ such that $supp(l_{y})\cap U\neq\varnothing$ for every $y\in V$. 
\end{claim}
Let $x_0\in supp(l_{y_0})\cap U$ and $\overline f\in E(\overline X)$ be such that $\overline f(\overline X\backslash W)=0$ and $l_{y_0}(\overline f)\neq 0$, where $W$ is a neighborhood of $x_0$ with $\overline W\subset U$.
Assuming the claim is not true we can find a sequence $\{y_n\}\subset\overline Y$ converging to $y_0$ such that $supp(l_{y_n})\cap U=\varnothing$ for every $n$.
Since $\overline X\backslash\overline W$ is a neighborhood of each $supp(l_{y_n})$, by Claim 10,  $l_{y_n}(\overline f)=0$. Because 
$\lim_n l_{y_n}(\overline f)=l_{y_0}(\overline f)$, we have $l_{y_0}(\overline f)=0$, a contradiction.

Following the notations from the proof of Proposition 2.1, for every $y\in\overline Y$ we define
$$a(y)=\sup\{|l_y(\overline f)|:\overline f\in E(\overline X){~}\hbox{and}{~}|\overline f(x)|<1{~} \forall x\in supp(l_y)\}.$$
\begin{claim}
If $y\in Y$ then  $supp(l_{y})=\{x_1(y),x_2(y),..,x_q(y)\}$ is a finite non-empty subset of $X$. Moreover, there exist real numbers $\lambda_i(y)$, $i=1,..,q$, such that $\sum_{i=1}^q|\lambda_i(y)|=a(y)$ and
 $l_y(\overline f)=\sum_{i=1}^q\lambda_i(y)\overline f(x_i(y))$ for all $\overline f\in E(\overline X)$.
\end{claim}
Let $\widetilde\varphi:LE_p(X)\to LE_p(Y)$ be the continuous extension of $\varphi$. Then every $l_y$, $y\in Y$, can be continuously extended to
a linear functional $\widetilde l_y:LE_p(X)\to\mathbb R$, $\widetilde l_y(\sum_{i=1}^k\lambda_i\cdot f_i)=\sum_{i=1}^k\lambda_i\cdot l_y(f_i)$.
Since $\widetilde\varphi$ is uniformly continuous, according to the proof of Claim 1 from Proposition 2.1, for every $y\in Y$ there exists a finite set $K=\{x_1(y),x_2(y),..,x_q(y)\}\subset X$ such that 
$$\sup\{|\widetilde l_y(f)|:f\in LE(X){~}\hbox{and}{~}|f(x)|<1{~} \forall x\in K\}<\infty.$$ 
Because $\widetilde l_y$ is linear, we can show that $\widetilde l_y(g)=0$ for any $g\in LE(X)$ with $g(x_i(y))=0$, $i=1,..,q$. Since $E(X)$ is a $QS$-algebra, for every $i$ there is a function $g_i\in E(X)$ such that $g_i(x_i(y))=1$ and $g_i(x_j(y))=0$ with $j\neq i$. Now, for every $f\in E(X)$ the function
$g=f-\sum_{i=1}^qg_i\cdot f(x_i(y))$ belongs to $LE(X)$ and $g(x_i(y))=0$ for all $i$. So, $\widetilde l_y(g)=0$ and 
$l_y(f)=\sum_{i=1}^q\lambda_i(y)f(x_i(y))$, where with $\lambda_i(y)=l_y(g_i)$. Because $y\in Y$, we can also write $$l_y(\overline f)=\sum_{i=1}^q\lambda_i(y)\overline f(x_i(y))$$ with $\lambda_i(y)=l_y(\overline g_i)$.
Note that each $\lambda_i(y)$ is a real number because $l_y(g_i)=\varphi(g_i)(y)$. The last equality is valid for all $\overline f\in E(\overline X)$ and shows that $supp(l_y)=\{x_i(y):\lambda_i(y)\neq 0\}\subset K$. Note that, by Claim 9, $supp(l_y)\neq\varnothing$. 

To complete the proof of Claim 12, assume that $supp(l_y)=K$, and for every natural $k$ take a function $f_k\in E(X)$ with $f_k(x_i(y))=\varepsilon_i(1-1/k)$, where 
$\varepsilon_i=1$ if $\lambda_i(y)>0$ and $\varepsilon_i=-1$ if $\lambda_i(y)<0$. Clearly, $|f_k(x_i(y))|<1$ for all $i,k$ and 
$\lim_kl_y(\overline f_k)=\sum_{i=1}^q|\lambda_i(y)|$. Hence $\sum_{i=1}^q|\lambda_i(y)|\leq a(y)$. The reverse inequality $a(y)\leq \sum_{i=1}^q|\lambda_i(y)|$ follows from $l_y(\overline f)=\sum_{i=1}^q\lambda_i(y)\overline f(x_i(y))$, $\overline f\in E(\overline X)$.

For every $p,q\in\mathbb N$ let $Y_{p,q}=\{y\in\overline Y:|supp(l_y)|\leq q{~}\mbox{and}{~}a(y)\leq p\}$.
\begin{claim}
Every set $Y_{p,q}$ is closed in $\overline Y$.
\end{claim}
Let $\{y_n\}$ be a sequence in $Y_{p,q}$ converging to $y\in\overline Y$. Suppose $y\not\in Y_{p,q}$. Then either $supp(l_y)$ contains at least $q+1$ points or $a(y)>p$.
If $supp(l_y)$ contains at least $q+1$ points $x_1,x_2,..,x_{q+1}$, we choose disjoint neighborhoods $O_i$ of $x_i$, $i=1,..,q+1$. By Claim 11, there is a neighborhood $V$ of $y$ such that $supp(l_z)\cap O_i\neq\varnothing$ for all $i$ and $z\in V$. This implies that $supp(l_{y_n})$ has at least $q+1$ points for infinitely many $n$'s, a contradiction. 

If $y\not\in Y_{p,q}$ and $a(y)>p$, then there exists $\overline f\in E(\overline X)$ such that $|\overline f(x)|<1$ for all $x\in supp(l_y)$ and $|l_y(\overline f)|>p$. Take
a neighborhood $U$ of $supp(l_y)$ with $U\subset\{x\in\overline X:|\overline f(x)|<1\}$.
Choose another neighborhood $W$ of $supp(l_y)$ with $\overline W\subset U$. Next, there is $h\in E(X)$ such that $\overline h(\overline W)=1$, 
$\overline h(\overline X\backslash U)=0$ and $h(x)\in [0,1]$ for all $x\in X$. Then  
$\overline g=\overline{h\cdot f}\in E(\overline X)$ and $|\overline g(x)|<1$ for all $x\in\overline X$. Moreover,
$\overline g|W=\overline f|W$. So, by Claim 10, $|l_y(\overline g)|=|l_y(\overline f)|>p$. Therefore, $V=\{z\in\overline Y:|l_z(\overline g)|>p\}$ is a neighborhood of $y$ in $\overline Y$ with $V\cap Y_{p,q}=\varnothing$, a contradiction.

According to Claim 12, for every $y\in Y$ then there exist $p,q\in\mathbb N$ and real numbers $\lambda_i(y)$ such that for all $\overline f\in E(\overline X)$ we have $l_y(\overline f)=\sum_{i=1}^q\lambda_i(y)\overline f(x_i(y))$ with $a(y)=\sum_{i=1}^q|\lambda_i(y)|\leq p$, where $\{x_1(y),..,x_q(y)\}$ is the support $supp(l_y)$. 
Hence, $Y\subset\bigcup\{Y_{p,q}:p,q\in\mathbb N\}$. For every $p\geq 1$ and $q\geq 2$ we
define 
$$M(p,1)=Y_{p,1}{~}\hbox{and}{~}M(p,q)=Y_{p,q}\backslash Y_{2p,q-1}.$$ 
Some of the sets $M(p,q)$ could be empty, but $Y\subset\bigcup\{M(p,q):p,q=1,2,..\}$. Indeed, by Claim 12, there exist $p,q$ with $|supp(l_y)|=q$ and $a(y)\leq p$. Then $y\in M(p,q)$.
Since each $Y_{p,q}$ is closed, $M(p,q)=\bigcup_{n=1}^\infty F_n'(p,q)$ such that each $F_n'(p,q)$ is a compact subset of $\overline Y$. We define
$F_n(p,q)=\overline{Y\cap F_n'(p,q)}$. Then $Y\subset\bigcup\{F_n(p,q):n,p,q=1,2,..\}$. 
Obviously,
$supp(l_y)$ consists of $q$ different points for any $y\in M(p,q)$. So, we have a map $S_{p,q}:M(p,q)\to [\overline X]^q$, $S_{p,q}(y)=supp(l_y)$. According to Claim 11, $S_{p,q}$ is continuous when $[\overline X]^q$ is equipped with the Vietoris topology. 
For every $y\in M(p,q)$ let $S_{p,q}(y)=\{x_i(y)\}_{i=1}^q$. Everywhere below we consider the restriction $S_{p,q}|F_n(p,q)$ and for every $z\in F_n(p,q)$ denote by $A(z)=\{y\in F_n(p,q):S_{p,q}(y)=S_{p,q}(z)\}$ the fiber $S_{p,q}^{-1}(S_{p,q}(z))$ generated by $z$. Since $F_n(p,q)$ is compact and $S_{p,q}$ is continuous, each $A(z)$ is a compact subset of $F_n(p,q)$.
\begin{claim}
Let $z\in Y\cap F_n(p,q)$. Then for every $y\in A(z)$ there are real numbers  
$\{\lambda_i(y)\}_{i=1}^{q}$ such that $l_y(\overline f)=\sum_{i=1}^{q}\lambda_i(y)\overline f(x_i(z))$, where $S_{p,q}(z)=\{x_i(z)\}_{i=1}^{q}$, and $\sum_{i=1}^q|\lambda_i(y)|\leq p$ for 
all $\overline f\in E(\overline X)$. Moreover, each $\lambda_i$ is a continuous real-valued function on $A(z)$.
\end{claim}
Choose neighborhoods $O_i$ of $x_i(z)$ in $\overline X$ with disjoint closures and functions $\overline g_i\in E(\overline X)$ with $\overline g_i|O_i=1$ and $\overline g_i|O_j=0$ if $j\neq i$ (this can be done by choosing $g_i\in E(X)$ with $g_i(O_i\cap X)=1$ and $g_i(O_j\cap X)=0$ when $j\neq i$). According to the proof of Claim 12,
for every $y\in A(z)\cap Y$ and the real numbers $\lambda_i(y)=l_y(\overline g_i)$ we have $l_y(\overline f)=\sum_{i=1}^{q}\lambda_i(y)\overline f(x_i(z))$ for 
all $\overline f\in E(\overline X)$. Let's show this is true for all $y\in A(z)$. So, fix
$y\in A(z)\backslash Y$ and take
a sequence $\{y_m\}\subset Y\cap F_n(p,q)$ converging to $y$. Then, by Claim 12,
$S_{p,q}(y_m)=\{x_i(y_m)\}_{i=1}^q\subset X$ and there are real numbers $\{\lambda_i(y_m)\}_{i=1}^q$ such that
$l_{y_m}(\overline f)=\sum_{i=1}^q\lambda_i(y_m)\overline f(x_i(y_m))$ for all $\overline f\in E(\overline X)$. 
On the other hand, since the map $S_{p,q}$ is continuous, each sequence $\{x_i(y_m)\}_{m=1}^\infty$ converges to $x_i(y)=x_i(z)$, $i=1,..,q$. So, we can assume that $\{x_i(y_m)\}_{m=1}^\infty\subset O_i$. Consequently, $l_{y_m}(\overline g_i)=\lambda_i(y_m)$ and $\lim_m\lambda_i(y_m)=l_y(\overline g_i)$. 
Since $\{y_m\}\subset Y\cap Y_{p,q}$, $\sum_{i=1}^q|\lambda_i(y_m)|\leq p$ for each $m$ (see Claim 12). Hence, $\sum_{i=1}^q|l_y(\overline g_i)|\leq p$.
Denoting $\lambda_i(y)=l_y(\overline g_i)$, we obtain $\sum_{i=1}^q|\lambda_i(y)|\leq p$. The last inequality means that all $\lambda_i(y)$ are real numbers. 
Since $\lim_m\overline f(x_i(y_m))=\overline f(x_i(y))$ for all $\overline f\in E(\overline X)$ and each $\overline f(x_i(y))$ is a real number (recall that $z\in Y$, so by Claim 12, $x_i(z)=x_i(y)\in X$), we have $l_y(\overline f)=\lim l_{y_m}(\overline f)=\sum_{i=1}^q\lambda_i(y)\overline f(x_i(y))$.

Finally, the equality $\lambda_i(y)=\overline{\varphi(g_i)}(y)$ implies that $\lambda_i$ are continuous on $A(z)$.
\begin{claim}
Let $A(z)=\{y\in F_n(p,q):S_{p,q}(y)=S_{p,q}(z)\}$ with $z\in Y\cap F_n(p,q)$. Then there is a linear continuous map
$\varphi_z:C_p(S_{p,q}(z))\to C_p(A(z))$ such that $\varphi_z(C(S_{p,q}(z)))$ is dense in  $C_p(A(z))$.
\end{claim}
Following the previous notations, for every $h\in C(S_{p,q}(z))$ and $y\in A(z)$ we define $\varphi_z(h)(y)=\sum_{i=1}^q\lambda_i(y)h(x_i(z))$. Because $\lambda_i$ are continuous real-valued functions on $A(z)$, so is each $\varphi_z(h)$. Continuity of $\varphi_z$ with respect to the pointwise convergence topology is obvious. Let's show that $\varphi_z(C(S_{p,q}(z)))$ is dense in  $C_p(A(z))$. Indeed, take $\theta\in C_p(A(z))$ and its neighborhood $V\subset C_p(A(z))$. Then extend $\theta$ to a function $\overline\theta\in C(\overline Y)$.   
Because the set of real-valued elements of $E_p(\overline Y)$ is dense in $C_p(\overline Y)$, there is $\overline g\in E_p(\overline Y)$ with $\overline g|A(z)\in V$, so $\overline g(y)\in\mathbb R$ for all $y\in\overline Y$. 
Next, choose $f\in E(X)$ such that $\varphi(f)=g$.  Since $z\in Y$, each $x_i(z)\in X$. So, all $\overline f(x_i(z))$ are real numbers. Then $h=\overline f|S_{p,q}(z)\in C(S_{p,q}(z))$ and, according to Claim 14, we have $\varphi_z(h)=\overline g|A(z)$.

\begin{claim}
The fibers $A(z)$ of the map $S_{p,q}: F_n(p,q)\to [\overline X]^q$ are $0$-dimensional for all $z\in Y\cap F_n(p,q)$. 
\end{claim}
Since $|S_{p,q}(z)|=q$, $C_p(S_{p,q}(z))$ is isomorphic to $\mathbb R^q$. Hence, Claim 15 together with basic facts from linear algebra imply that $|A(z)|\leq q$, in particular $A(z)$ is zero-dimensional.

Now, we can complete the proof of Proposition 4.1. As in the proof of Proposition 2.1, using that $\overline X^q\in\mathcal P$, we can show that 
$[\overline X]^q\in\mathcal P$.  
Therefore, condition $(c')$ implies that each $F_n(p,q)$ has also the property $\mathcal P$, and so does $Y_n(p,q)=F_n(p,q)\cap Y$. Finally, since $Y$ is the union of its closed subsets $Y_n(p,q)$, we conclude that $Y\in\mathcal P$.
\end{proof}
\begin{lem}
For every countable set $\Phi'\subset C(\beta Y,\overline{\mathbb R})$ there is a countable set $\Phi\subset C(\beta Y,\overline{\mathbb R})$ containing $\Phi'$ such that $(\triangle\Phi)(\beta Y)$ is homeomorphic to $(\triangle\Phi')(\beta Y)$ and the set of real-valued elements of $E_\Phi=\{\pi_{\overline g}:\overline g\in\Phi\}$ is dense in
$C_p((\triangle\Phi)(\beta Y))$.
\end{lem}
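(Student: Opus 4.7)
The natural plan is to enlarge $\Phi'$ by countably many pull-backs of a sup-norm-dense sequence in $C(Z')$, where $Z'=(\triangle\Phi')(\beta Y)$. Such functions automatically factor through $\triangle\Phi'$, so adjoining them will not refine the quotient of $\beta Y$ determined by $\triangle\Phi'$, while they will provide the required density of real-valued projections.

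First I would observe that because $\Phi'$ is countable and each $\overline g\in\Phi'$ takes values in the compact metrizable space $\overline{\mathbb R}$, the compact set $Z'\subset\overline{\mathbb R}^{\Phi'}$ is a compact metrizable space. Consequently $C(Z')$ with the sup-norm is separable; fix a countable sup-norm-dense set $\{h_n:n\geq 1\}\subset C(Z')$. Define $\overline g_n=h_n\circ(\triangle\Phi'):\beta Y\to\mathbb R$. Each $\overline g_n$ is continuous, and since $h_n(Z')$ is compact in $\mathbb R$ it is bounded, so in particular $\overline g_n\in C(\beta Y,\overline{\mathbb R})$. Set $\Phi=\Phi'\cup\{\overline g_n:n\geq 1\}$, still a countable subset of $C(\beta Y,\overline{\mathbb R})$ containing $\Phi'$.

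Next I would verify that the natural coordinate projection $q:Z\to Z'$, where $Z=(\triangle\Phi)(\beta Y)$, is a homeomorphism. Since $q$ is a continuous surjection between compact Hausdorff spaces, it is enough to show injectivity of the equivalence relation, i.e.\ that $(\triangle\Phi')(y_1)=(\triangle\Phi')(y_2)$ implies $(\triangle\Phi)(y_1)=(\triangle\Phi)(y_2)$ for $y_1,y_2\in\beta Y$. But from the equality $(\triangle\Phi')(y_1)=(\triangle\Phi')(y_2)$ we get $\overline g_n(y_1)=h_n((\triangle\Phi')(y_1))=h_n((\triangle\Phi')(y_2))=\overline g_n(y_2)$ for every $n$, so $(\triangle\Phi)(y_1)=(\triangle\Phi)(y_2)$, as required.

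Finally, under the homeomorphism $q:Z\to Z'$ each $\pi_{\overline g_n}$ on $Z$ corresponds to $h_n$ on $Z'$, and every $\pi_{\overline g_n}$ is real-valued because $\overline g_n(\beta Y)\subset\mathbb R$. Since $\{h_n\}$ is sup-norm dense in $C(Z')$ and the pointwise topology is coarser than the sup-norm topology, $\{h_n\}$ is dense in $C_p(Z')$; hence $\{\pi_{\overline g_n}\}$ is dense in $C_p(Z)$ and, being a subset of the real-valued elements of $E_\Phi$, this yields the claimed density. There is no substantial obstacle in this argument: the only point requiring any care is that the newly added functions must not identify further points of $Z'$, which is automatic since each $\overline g_n$ factors through $\triangle\Phi'$ by construction.
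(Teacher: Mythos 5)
Your proof is correct and follows essentially the same route as the paper: enlarge $\Phi'$ by pullbacks along $\triangle\Phi'$ of a countable family that is dense in $C_p((\triangle\Phi')(\beta Y))$, observe that such functions factor through $\triangle\Phi'$ and hence do not change the quotient, and conclude. The only (immaterial) difference is how the countable dense family is produced: the paper takes a countable $QS$-algebra on the metrizable compactum $(\triangle\Phi')(\beta Y)$ (citing Gul'ko), whereas you use sup-norm separability of $C$ of a compact metrizable space.
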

\begin{proof}
Let $\phi'=\triangle\Phi'$. Since $\Phi'$ is countable, $\phi'(\beta Y)$ is a metrizable compactum. Hence, by \cite[Proposition 1.2]{gu}, there is a countable $QS$-algebra $E\subset C(\phi'(\beta Y))$. Let $\Phi=\Phi'\cup\{g\circ\phi':g\in E\}$. Since the functions of $E$ separate the points and closed subsets of 
$\phi'(\beta Y)$, $(\triangle\Phi)(\beta Y)$ is homeomorphic to $\phi'(\beta Y)$. Since $E$ is a $QS$-algebra on $\phi'(\beta Y)$, $E$ is a dense subset of $C_p(\phi'(\beta Y))$. Clearly $E$ is a subset of $E_\Phi$ and consists of real-valued functions.
\end{proof}
\begin{lem}
Let $X$ be a $0$-dimensional space and $\Psi'\subset C(X)$ be a countable set. Then there is a countable admissible set $\Psi\subset C(X)$ containing $\Psi'$
and a $0$-dimensional metrizable compactification $\overline X_\Psi$ of $X_\Psi=(\triangle\Psi)(X)$ having a countable base $\mathcal B$ such that:
\begin{itemize}
\item $\overline X_\Psi=(\triangle\overline\Psi)(\beta X)$ with $\overline\Psi=\{\overline f:f\in\Psi\}\subset C(\beta X,\overline{\mathbb R})$;
\item Each $\pi_f$, $f\in\Psi$, is extendable to a map $\overline\pi_f:\overline X_\Psi\to\overline{\mathbb R}$; 
\item $E(X_\Psi)=\{\pi_f:f\in\Psi\}$ is a countable $QS$-algebra on $X_\Psi$ and $E(\overline X_\Psi)=\{\overline\pi_f:f\in\Psi\}$ contains a countable $QS$-algebra on $\overline X_\Psi$ satisfying condition $(2.3)$;
\item For every finite open cover $\gamma$ of $\overline X_\Psi$ with elements from $\mathcal B$ the family $E(\overline X_\Psi)$ contains a partition of unity subordinated to $\gamma$.
\end{itemize}
\end{lem}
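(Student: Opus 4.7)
The plan is to produce $\Psi$ as the union of an increasing sequence of countable $QS$-algebras $\Psi_n\subset C(X)$ and to realize $\overline X_\Psi$ as the inverse limit of a sequence of $0$-dimensional metric compactifications $\overline X_n$ of $X$. I would set $\Psi_1=\Psi'$ and, inductively at stage $n$, invoke Lemma 3.2 on $\Psi_n$, feeding $\overline X_{n-1}$ into the initial metric compactification used in the Marde\v{s}i\'c factorization step of its proof; this yields a $0$-dimensional metric compactification $\overline X_n$ of $X$ refining $\overline X_{n-1}$ on which every $f\in\Psi_n$ admits an extension $\overline f:\overline X_n\to\overline{\mathbb R}$. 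Then I would fix a countable clopen base $\mathcal B_n$ of $\overline X_n$ closed under finite Boolean operations, and, using $(2.1)$ together with the construction in $(2.3)$, a countable $QS$-algebra $\mathcal A_n\subset C(\overline X_n)$ containing $\{\chi_U:U\in\mathcal B_n\}$ and satisfying $(2.3)$. Finally I would let $\Psi_{n+1}$ be the $QS$-algebra on $X$ generated by $\Psi_n\cup(\mathcal A_n|X)$.

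Setting $\Psi=\bigcup_n\Psi_n$, admissibility will follow at once because $\Psi$ is itself a $QS$-algebra on $X$ that separates points and closed sets (inherited from $\mathcal A_1|X$), so $\pi(\Psi)$ is a $QS$-algebra on $X_\Psi$. Let $\overline X_\infty=\varprojlim\overline X_n$ with projections $p_n:\overline X_\infty\to\overline X_n$ and $\mu:\beta X\to\overline X_\infty$ the canonical map. Each $f\in\Psi$ lies in some $\Psi_n$, so its extension $\overline f:\beta X\to\overline{\mathbb R}$ factors through $\overline X_n$ and hence through $\overline X_\infty$; thus $\triangle\overline\Psi=\widetilde\Psi\circ\mu$ for some $\widetilde\Psi:\overline X_\infty\to\prod_f\overline{\mathbb R}$, and because each $\mathcal A_n$ separates points of $\overline X_n$, the map $\widetilde\Psi$ is injective. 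Consequently $\overline X_\Psi=(\triangle\overline\Psi)(\beta X)$ is homeomorphic to $\overline X_\infty$, which is $0$-dimensional, compact, and metrizable; I would then take $\mathcal B$ to be the pullback of $\bigcup_n p_n^{-1}(\mathcal B_n)$ under this homeomorphism. The countable family $\bigcup_n\{g\circ p_n:g\in\mathcal A_n\}$ is a real-valued $QS$-algebra contained in $E(\overline X_\Psi)$ and inherits $(2.3)$ from the $\mathcal A_n$'s via the standard argument that any compact set lying in an open set of the inverse limit descends to the same configuration in some $\overline X_n$. Partitions of unity subordinated to a finite cover $\gamma$ of $\overline X_\Psi$ by elements of $\mathcal B$ are produced in the same way: $\gamma$ pulls back to a finite clopen cover of some $\overline X_n$, which admits a refinement by pairwise disjoint clopens in $\mathcal B_n$ whose characteristic functions belong to $\mathcal A_n$ and lift into $E(\overline X_\Psi)$.

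The main obstacle I expect is organizing the tower of compactifications so that the $\overline X_n$ form a compatible inverse system with $\overline X_\Psi$ captured exactly as $\varprojlim\overline X_n$; this relies on inspecting the Marde\v{s}i\'c factorization in the proof of Lemma 3.1/3.2 and forcing the newly constructed $0$-dimensional compactification to sit above a prescribed $\overline X_{n-1}$. A secondary technical annoyance is that when $\Psi_{n+1}$ is formed from $\Psi_n\cup(\mathcal A_n|X)$, products or $\mathbb Q$-linear combinations mixing unbounded functions with $\overline{\mathbb R}$-valued extensions need not extend continuously to $\overline X_n$ (the familiar $0\cdot\infty$ and $\infty-\infty$ obstructions), which is precisely why the construction must be iterated rather than attempted in a single step: the offending new functions get absorbed into $\overline X_{n+1}$ at the next stage and the ambiguity disappears in the inverse limit.
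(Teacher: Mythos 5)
Your overall architecture --- an increasing tower of countable function families, a corresponding inverse sequence of $0$-dimensional metrizable compacta, and a passage to the inverse limit where the $QS$-algebra, condition $(2.3)$ and the partitions of unity are obtained by lifting from finite stages --- is exactly the paper's, and the inverse-limit portion of your argument (descending a compact pair $K\subset W$ to a finite stage, disjointifying a clopen cover there, lifting along the projections) is sound; using characteristic functions of clopen sets instead of general partitions of unity is a harmless simplification.

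The genuine gap is at the very start. In Lemma 4.3 the space $X$ is an arbitrary $0$-dimensional Tychonoff space, not a separable metrizable one, so there is no ``$0$-dimensional metric compactification $\overline X_n$ of $X$'' to which Lemma 3.2 could be applied: Lemma 3.2 explicitly assumes $X$ separable metrizable. For the same reason your claim that ``$\Psi$ is itself a $QS$-algebra on $X$ that separates points and closed sets'' cannot hold --- a countable family cannot witness condition (ii) of a $QS$-algebra on a space of uncountable weight. What is needed, and what the paper does first, is to replace $X$ by a separable metrizable image: enlarge $\Psi'$ via $(3.2)$ and $(3.3)$ to a countable admissible set $\Psi_0$ with $\dim(\triangle\Psi_0)(X)=0$, put $X_0=(\triangle\Psi_0)(X)$, and then run the entire induction with $QS$-algebras $\Theta_n$ on the \emph{fixed} space $X_0$ and metrizable compactifications $Z_n$ of $X_0$ (each obtained by applying Lemma 3.2 to $X_0$ and arranged to sit over $Z_{n-1}$). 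Note also that $\dim X=0$ does not by itself give $\dim(\triangle\Psi')(X)=0$; securing a $0$-dimensional image is precisely what the factorization $(3.2)$ is for, and this step is absent from your outline. Once $X_0$ is fixed, admissibility of $\Psi=\{h\circ(\triangle\Psi_0):h\in\bigcup_n\Theta_n\}$ and the identification $X_\Psi\cong X_0$ follow because $\bigcup_n\Theta_n$ is a $QS$-algebra on the second-countable space $X_0$; with this repair the rest of your argument goes through essentially as written.
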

\begin{proof}
We first choose a countable admissible set $\Psi_0\subset C(X)$ such that $\dim(\triangle\Psi_0)(X)=0$ and $\Psi'\subset \Psi_0$, see $(3.2)-(3.3)$. Then, by Lemma 3.2 there is a metrizable compactification $Z_0$ of $X_0=(\triangle\Psi_0)(X)$ with $\dim Z_0=0$ such that each $\pi_f$, $f\in\Psi_0$, is extendable to a map 
$\overline\pi_f:Z_0\to\overline{\mathbb R}$. Choose a countable $QS$-algebra $C_0\subset C(Z_0)$ satisfying condition $(2.3)$ (that can be done because $Z_0$ has a countable base, see the explanations in $(2.3)$). 
Let $\mathcal B_0$ be a countable base for $Z_0$ and for every finite open cover $\gamma$ of $Z_0$ consisting of open sets from $\mathcal B_0$ fix a partition of unity $\alpha_\gamma$ subordinated to $\gamma$. Because the family $\Omega_0$ of all finite open covers of $Z_0$ consisting of elements of $\mathcal B_0$ is countable, so is the family $E(Z_0)=\{\overline f:f\in\Psi_0\}\cup\{\alpha_\gamma:\gamma\in\Omega_0\}\cup C_0$. The set $E_0=\{h|X_0:h\in E(Z_0)\}$ may not be a $QS$-algebra on $X_0$ but, according to \cite[Proposition 1.2]{gu}, there exists a countable $QS$-algebra $\Theta_1$ on $X_0$ containing $E_0$ as a proper subset. For every $h\in\Theta_{1}$ the function $h\circ(\triangle\Psi_0):X\to{\mathbb R}$ can be extended to a map $\widetilde h:\beta X\to\overline{\mathbb R}$ and let $P_h=\widetilde h(\beta X)$.
Because $\{h|X_0:h\in C_0\}\subset\Theta_1$ and it separates the points and the closed sets of $X_0$, by Lemma 3.2, there is a metrizable compactification $Z_1$ of $X_0$ such that $\dim Z_1=0$, each $h\in\Theta_1$ is extendable to a map $\overline h:Z_1\to\overline{\mathbb R}$. 
The compactification $Z_1$ is a closed subset of the product $Z_0\times\prod_{h\in\Theta_1}P_h$. Then the projection  
$Z_0\times\prod_{h\in\Theta_1}P_h\to Z_0$ provides a map 
$\theta^1_0:Z_1\to Z_0$ such that $\theta^1_0\circ j_1=j_0$ with $j_1:X_0\to Z_1$ and $j_0:X_0\to Z_0$ being the corresponding embeddings.  

Next, fix a base $\mathcal B_1$ on $Z_1$ containing all sets $(\theta^1_0)^{-1}(U)$, $U\in\mathcal B_0$, which is closed under finite intersections, and consider the family $\Omega_1$ of all finite open covers of $Z_1$ consisting of sets from $\mathcal B_1$.
For each $\gamma\in\Omega_1$ fix a partition of unity $\alpha_\gamma$ subordinated to $\gamma$ and let $E(Z_1)=\{\overline h:h\in\Theta_1\}\cup\{\alpha_\gamma:\gamma\in\Omega_1\}\cup C_1$ and $E_1=\{f|X_0:f\in E(Z_1)\}$, where $C_1\subset C(Z_1)$ is  a countable $QS$-algebra of $Z_1$ satisfying condition $(2.3)$ such that $\{h\circ\theta^1_0:h\in C_0\}\subset C_1$. 
We construct by induction an increasing sequence $\{\Theta_n\}$ of countable $QS$-algebras on $X_0$, $0$-dimensional metrizable compactifications $Z_n$ of $X_0$ with a countable base $\mathcal B_n$, countable $QS$-algebras $C_n\subset C(Z_n)$ on $Z_n$, countable families 
$E(Z_n)\subset C(Z_n,\overline{\mathbb R})$ and surjective maps $\theta^{n+1}_n:Z_{n+1}\to Z_{n}$ such that:
\begin{itemize}
\item[(1)] $\mathcal B_{n+1}$ contains all inverse images $(\theta^{n+1}_n)^{-1}(U)$, $U\in\mathcal B_{n}$, and is closed under finite intersections;
\item[(2)] $C_{n+1}$ satisfies condition $(2.3)$ and $\{h\circ\theta^{n+1}_n:h\in C_{n}\}\subset C_{n+1}$;
\item[(3)] Every $Z_{n+1}$ is a 0-dimensional metrizable compactification of $X_0$ such that every $h\in\Theta_{n+1}$ is extendable to a map $\overline h\in C(Z_{n+1},\overline{\mathbb R})$;
\item[(4)] $E(Z_{n+1})=\{\overline h:h\in\Theta_{n+1}\}\cup\{\alpha_\gamma:\gamma\in\Omega_{n+1}\}\cup C_{n+1}$, where $\Omega_{n+1}$ is the family of all finite open covers $\gamma$ of $Z_{n+1}$ with elements from $\mathcal B_{n+1}$ and $\alpha_\gamma$ is a partition of unity subordinated to $\gamma$;
\item[(5)] $E_n=\{f|X_0:f\in E(Z_n)\}\subset\Theta_{n+1}$.        
\end{itemize}
If the construction is performed for all $k\leq n$, let $E_n=\{h|X_0:\overline h\in E(Z_n)\}$ and $\Theta_{n+1}$ be a countable $QS$-algebra on $X_0$ with $E_n\subset\Theta_{n+1}$. For every $h\in\Theta_{n+1}$ the function $h\circ(\triangle\Psi_0):X\to{\mathbb R}$ can be extended to a map $\widetilde h:\beta X\to\overline{\mathbb R}$ and let $P_h=\widetilde h(\beta X)$.
By Lemma 3.2, there exists a metrizable compactification $Z_{n+1}$ of $X_0$ such that $\dim Z_{n+1}=0$ and each $h\in\Theta_{n+1}$ is extendable to a map $\overline h:Z_{n+1}\to\overline{\mathbb R}$.  The space $Z_{n+1}$ is a closed subset of 
$Z_n\times\prod_{h\in\Theta_{n+1}}P_h$. So, the projection $Z_n\times\prod_{h\in\Theta_{n+1}}P_h\to Z_n$ determines a map $\theta^{n+1}_n:Z_{n+1}\to Z_n$ such that  $\theta^{n+1}_{n}\circ j_{n+1}=j_{n}$, where $j_{n+1}:X_0\to Z_{n+1}$ and $j_{n}:X_0\to Z_{n}$ are the corresponding embeddings. Next, choose a base $\mathcal B_{n+1}$ of $Z_{n+1}$, a countable $QS$-algebra $C_{n+1}\subset C(Z_{n+1})$ and a countable family $E(Z_{n+1})$ satisfying condition $(1)-(5)$.

Since $\{\Theta_n\}$ is an increasing sequence of $QS$-algebras on $X_0$, $\Theta=\bigcup_n\Theta_n$ is also a countable $QS$-algebra on $X_0$, and the limit space $Z$ of the inverse sequence $S=\{Z_n,\theta^n_{n-1}\}$ is  $0$-dimensional. Because $\theta^{n+1}_{n}\circ j_{n+1}=j_{n}$ with 
$j_{n+1}:X_0\to Z_{n+1}$ and $j_{n}:X_0\to Z_{n}$ being embeddings, there is an embedding $j:X_0\to Z$ such that $\theta_n\circ j=j_n$, where 
$\theta_n:Z\to Z_n$ is the projection in $S$. So, $Z$ is a compactification of $X_0$ and 
every $h\in\Theta$ is extendable to a map $\overline{\overline h}:Z\to\overline{\mathbb R}$. Indeed, if $h\in\Theta_n$, then $h$ can be extended to a map $\overline h:Z_n\to\overline{\mathbb R}$ and 
 $\overline{\overline h}=\overline h\circ\theta_n$ is an extension of $h$ over $Z$.
Hence, $\Theta=\{g|Z:g\in E(Z)\}$, where $E(Z)=\bigcup_n\{h\circ\theta_n:h\in E(Z_n)\}$. 

Denote $C_n'=\{h\circ\theta_n:h\in C_n\}$, $n\geq 0$. Because $\{h\circ\theta^n_{n-1}:h\in C_{n-1}\}\subset C_n$,
the sequence $\{C'_n\}$ is increasing and $C=\bigcup_n C'_n$ is closed under multiplications, additions and additions by rational numbers. To show it
a $QS$-algebra on $Z$ we need to prove that for every point $z\in Z$ and its neighborhood $U\subset Z$ there is $h\in C$ such that $h(z)=1$ and $h(Z\backslash U)=0$. But that follows from condition $(2.3)$, so let show that $C$ satisfies condition $(2.3)$. To this end, take a compact set $K\subset Z$ and an open set $W\subset Z$ containing $K$. Then there exist $n$, a compact set $K_n\subset Z_n$ and open set $W_n\subset Z_n$ containing $K_n$ such that $K\subset\theta^{-1}(K_n)\subset\theta^{-1}(W_n)\subset W$. Since $C_n$ satisfies condition $(2.3)$, there is $h\in C_n$
with $h|K_n=1$, $h|(Z_n\backslash W_n)=0$ and $h(z)\in [0,1]$ for all $z\in Z_n$. Then 
$h'=\theta_n\circ h\in C$, $h':Z\to [0,1]$, $h'|K=1$ and $h'|(X\backslash W)=0$.  

Recall that all finite intersections $U=\bigcap_{i=1}^k\theta_i^{-1}(U_i)$ with $U_i\in\mathcal B_i$ form a base $\mathcal B$ for $Z$. Because of the choice of all $\mathcal B_i$, see condition $(1)$, $\mathcal B$ consists of all sets of the form $U=\theta_n^{-1}(U_n)$ with $U_n\in\mathcal B_n$, $n\in\mathbb N$. 
Let's show that for any finite open cover $\gamma$ of $Z$ consisting of sets from $\mathcal B$, the set $E(Z)$ contains a partition of unity subordinated to $\gamma$. Indeed, for any such a cover $\gamma=\{U_1,..,U_k\}$ there is $n$ and a cover $\gamma_n=\{U_1^n,..,U_k^n\}\in\Omega_n$ such that $U_i=\theta_n^{-1}(U_i^n)$. So, there is a partition of unity 
$\alpha_{\gamma_n}=\{h_i^n:i=1,..,k\}$ subordinated to $\gamma_n$ with $\alpha_{\gamma_n}\subset E(Z_n)$.  
Then $\{h^n_i\circ\theta_n:i=1,...,k\}$ is a partition of unity subordinated to $\gamma$ and it is contained in $E(Z)$.
Finally, let
$\Psi=\{h\circ\triangle\Psi_0:h\in\Theta\}$, $\overline\Psi=\{\overline f:f\in\Psi\}\subset C(\beta X,\overline{\mathbb R})$ and $\overline X_\Psi=(\triangle\overline\Psi)(\beta X)$. Since $C$ is a $QS$-algebra on $Z$, it separates the points and the closed sets in $Z$. Moreover, $C\subset E(Z)$, 
which means that
$\overline X_\Psi$ is homeomorphic to $Z$.
\end{proof}
\textit{Proof of Theorem $1.4$.} Let $T:C_p(X)\to C_p(Y)$ be a continuous linear surjection and $\dim X=0$. 
To show that $\dim Y=0$, it suffices to prove that for every $h\in\mathcal F_{Y}$ there exists  $h_0\in\mathcal F_{Y}$ with $h_0\succ h$ and $\dim h_0(Y)=0$.
To this end, fix $h\in\mathcal F_{Y}$. Following the proof of Theorem 1.1 and using Lemmas 4.2-4.3, we are constructing two increasing sequences of countable sets 
$\{\overline\Psi_n\}\subset C(\beta X,\overline{\mathbb R})$, $\{\overline\Phi_n\}\subset C(\beta Y,\overline{\mathbb R})$, metrizable compactifications $\overline X_n=(\triangle\overline\Psi_n)(\beta X)$ and $\overline Y_n=(\triangle\overline\Phi_n)(\beta Y)$ of
the spaces $X_n=(\triangle\Psi_n)(X)$ and $Y_n=(\triangle\Phi_n)(Y)$, where $\Psi_n=\overline\Psi_n|X$ and $\Phi_n=\overline\Phi_n|Y$, 
countable bases $\mathcal B_n'$ and $\mathcal B_n''$ for $\overline X_n$ and 
$\overline Y_n$ and continuous surjections $\theta^{n+1}_n:\overline X_{n+1}\to\overline X_n$, $\delta^{n+1}_n:\overline Y_{n+1}\to\overline Y_n$
satisfying the following conditions (everywhere below, if $f\in C(X)$ then $\overline f:\beta X\to\overline{\mathbb R}$ denotes its extension):
\begin{itemize}
\item[(4.1)] $\triangle\Phi_1\succ h$, $\Phi_n\subset\{T(f):f\in\Psi_{n}\}\subset\Phi_{n+1}$ and $\Psi_n\subset\Psi_{n+1}$;
\item[(4.2)] Each  $\pi_f$, $f\in\Psi_n$, is extendable to a map $\overline\pi_f:\overline X_n\to\overline{\mathbb R}$;
\item[(4.3)] $\Psi_n$ is admissible, $\dim\overline X_n=0$ and $\mathcal B_n'$ satisfies condition $(1)$ from Lemma 4.3;
\item[(4.4)] $E(\overline X_n)=\{\overline\pi_f:f\in\Psi_n\}$ contains a countable $QS$-algebra $C_n\subset C(\overline X_n)$ on $\overline X_n$ such that $C_n$ satisfies condition $(2)$ from the proof of Lemma 4.3;
\item[(4.5)] For every finite open cover $\gamma$ of $\overline X_n$, consisting of sets from $\mathcal B_n'$, there exists a partition of unity $\alpha_\gamma$ subordinated to $\gamma$ with $\alpha_\gamma\subset E(\overline X_n)$;
\item[(4.6)] Every  $\pi_g$, $g\in\Phi_n$, is extendable to a map $\overline\pi_g:\overline Y_n\to\overline{\mathbb R}$;
\item[(4.7)]  $\mathcal B_{n}''$ contains all inverse images $(\delta^{n}_{n-1})^{-1}(U)$, $U\in\mathcal B_{n-1}''$, and is closed under finite intersections;
\item[(4.8)] The set of real-valued functions from $E(\overline Y_n)=\{\overline\pi_g:g\in\Phi_n\}$ is dense in $C_p(\overline Y_n)$.
\end{itemize}
Since $h(Y)$ is a separable metrizable space, there is a countable set $\Phi_1'\subset C(Y)$ with $h=\triangle\Phi_1'$. Let 
$\overline{\Phi_1'}=\{\overline g:g\in\Phi'_1\}\subset C(\beta Y,\overline{\mathbb R})$. By Lemma 4.2, there is a countable set $\overline\Phi_1\subset C(\beta Y,\overline{\mathbb R})$ containing $\overline{\Phi_1'}$ such that $(\triangle\overline\Phi_1)(\beta Y)$ is homeomorphic to $(\triangle\overline{\Phi_1'})(\beta Y)$ and $\{\pi_{\overline g}:\overline g\in\overline\Phi_1\}$ contains a dense subset of $C_p(\overline Y_1)$, where 
$\overline Y_1=(\triangle\overline\Phi_1)(\beta Y)$. Let $\Phi_1=\{g:\overline g\in\overline\Phi_1\}$, $Y_1=(\triangle\Phi_1)(Y)$ and 
$E(\overline Y_1)=\{\overline\pi_{g}:g\in\Phi_1\}$. So, $\Phi_1$ satisfies conditions $(4.6)-(4.7)$.
Next, choose a countable set $\Psi_1'\subset C(X)$ with $T(\Psi_1')=\Phi_1$ and apply Lemma 4.3 to find  a countable admissible set $\Psi_1$ containing 
$\Psi_1'$ and a metrizable compactification $\overline X_1$ of $X_1=\triangle\Psi_1(X)$ satisfying conditions $(4.2)-(4.5)$.

Suppose the construction is done for all $k\leq n$. Let $\Phi_{n+1}'\subset C(Y)$ be a countable set containing $T(\Psi_n)$ and denote 
 $\overline{\Phi'}_{n+1}=\{\overline g:g\in\Phi'_{n+1}\}\subset C(\beta Y,\overline{\mathbb R})$. By Lemma 4.2, there is a countable set $\overline\Phi_{n+1}\subset C(\beta Y,\overline{\mathbb R})$ containing $\overline{\Phi'}_{n+1}$ such that $(\triangle\overline\Phi_{n+1})(\beta Y)$ is homeomorphic to $(\triangle\overline{\Phi'}_{n+1})(\beta Y)$ and $\{\pi_{\overline g}:\overline g\in\overline\Phi_{n+1}\}$ contains a dense subset of $C_p(\overline Y_{n+1})$, where 
$\overline Y_{n+1}=(\triangle\overline\Phi_{n+1})(\beta Y)$. Let $\Phi_{n+1}=\{g:\overline g\in\overline\Phi_{n+1}\}$ and $Y_{n+1}=(\triangle\Phi_{n+1})(Y)$.
Note that $\Phi_n\subset\Phi_{n+1}$ because $\Phi_n\subset T(\Psi_n)$.
Next, choose a countable set $\Psi_{n+1}'\subset C(X)$ with $T(\Psi_{n+1}')=\Phi_{n+1}$ and apply Lemma 4.3 to find  a countable admissible set $\Psi_{n+1}$ containing 
$\Psi_{n+1}'\cup\Psi_n$ and a metrizable compactification $\overline X_{n+1}$ of $X_{n+1}=(\triangle\Psi_{n+1})(X)$ satisfying conditions $(4.1)-(4.5)$.
Because $\overline\Psi_n\subset\overline\Psi_{n+1}$, $\triangle\overline\Psi_{n+1}\succ\triangle\overline\Psi_{n}$. Hence, there exists a map  
$\theta^{n+1}_n:\overline X_{n+1}\to\overline X_n$ defined by $\theta^{n+1}_n=\triangle\overline\Psi_{n}((\triangle\overline\Psi_{n+1})^{-1}(x))$. 
This completes the induction. 

As in the proof of Theorem 1.1, we denote $\overline X_0=(\triangle\overline\Psi)(\beta X)$,
$\overline Y_0=(\triangle\overline\Phi)(\beta Y)$ and $h_0=\triangle\Phi$, where $\Psi=\bigcup_n\Psi_n$ and $\Phi=\bigcup_n\Phi_n$. Clearly, $\Phi=\{T(f): f\in\Psi\}$.
Since $\overline X_0$ is the limit space of the inverse sequence $S_X=\{\overline X_n,\theta^{n+1}_n\}$ and $\dim\overline X_n=0$ for all $n$, $\dim\overline X_0=0$. Moreover, $E(X_0)=\{\pi_f:f\in\Psi\}$ is a countable $QS$-algebra on $X_0$ such that every $\pi_f$ is extendable to a continuous map $\overline\pi_f:\overline X_0\to\overline{\mathbb R}$. Denote $E(\overline X_0)=\{\overline\pi_f:f\in\Psi\}$.
Let also $C=\bigcup_n\{h\circ\theta_n:h\in C_n\}\subset E(\overline X_0)$. The same arguments as in the proof of Lemma 4.3 show that $C$ is a $QS$-algebra on $\overline X_0$ satisfying condition $(2.3)$. Let $\mathcal B'$ be the base of $\overline X_0$ generated by the bases $\mathcal B_n'$. The arguments from the proof of Lemma 4.3 also provide that 
 for every finite open cover $\gamma$ of $\overline X_0$, consisting of open sets from $\mathcal B'$ there exists a partition of unity $\alpha_\gamma$ subordinated to $\gamma$ with $\alpha_\gamma\subset E(\overline X_0)$. 

Because $\Phi_n\subset\Phi_{n+1}$, $\overline\Phi_n\subset\overline\Phi_{n+1}$. This implies $\triangle\Phi_{n+1}\succ\triangle\Phi_{n}$. So, for every $n$ there is map $\delta^{n+1}_n:\overline Y_{n+1}\to\overline Y_n$ defined by $\delta^{n+1}_n(y)=\triangle\Phi_{n}((\triangle\Phi_{n+1})^{-1}(y))$.
Then $\overline Y_0$ is the limit of the inverse sequence $S_Y=\{\overline Y_n,\delta^{n+1}_n\}$. 
Let $E(Y_0)=\{\pi_g:g\in\Phi\}$ and $E(\overline Y_0)=\{\overline\pi_{g}:g\in\Phi\}$. 
We claim that the set of real-valued elements of $E_p(\overline Y_0)$ is dense in $C_p(\overline Y_0)$. 
Indeed, every projections $\delta_n:\overline Y_0\to\overline Y_n$ induces a continuous map 
$\delta_n^*:C_p(\overline Y_n)\to C_p(\overline Y_0)$ defined by $\delta_n^*(h)=h\circ\delta_n$. 
Because the set of real-valued functions from  
$E(\overline Y_n)=\{\overline\pi_{g}:g\in\Phi_n\}$ is dense in $C_p(\overline Y_n)$ and 
$E_p(\overline Y_0)=\bigcup_n\delta_n^*(E(\overline Y_n))$, it suffices to show that  
$\bigcup_n\delta_n^*(C_p(\overline Y_n))$ is dense in $C_p(\overline Y_0)$. To this end, let 
$O=\{f\in C_p(\overline Y_0):|f(\overline y_i)-f_0(\overline y_i)|<\varepsilon_i, i=1,..,k\}$ be a neighborhood of some $f_0\in C_p(\overline Y_0)$, where 
all $\overline y_i$ are different points from $\overline Y_0$. Since the base $\mathcal B''$ of $\overline Y_0$ consists of all sets of the form $\delta_n^{-1}(U_n)$, $U_n\in\mathcal B_n''$, there is $n$ and different points $y_i\in\overline Y_n$ such that $\delta_n(\overline y_i)=y_i$ and $\delta_n^{-1}(U_i)\subset f_0^{-1}(V_i)$, where $U_i\in\mathcal B_n''$ and $V_i$ is the open interval $(f_0(\overline y_i)-\varepsilon_i,f_0(\overline y_i)+\varepsilon_i)$. Then the set $W=\{h\in C_p(\overline Y_n):h(y_i)\in V_i, i=1,2,..,k\}$  is open in $C_p(\overline Y_n)$. So, it contains a function
$h_0\in E(\overline Y_n)$. Hence, $h_0\circ\delta_n$ is a function from $\bigcup_n\delta_n^*(C_p(\overline Y_n))\cap O$. So, the set of all real-valued functions from $E(\overline Y_0)$ is dense in $C_p(\overline Y_0)$.

Because $T$ is linear, so is the map $\varphi:E_p(X_0)\to E_p(Y_0)$ defined by  $\varphi(\pi_f)=\pi_{T(f)}$. 
The arguments from the proof of Theorem 1.1 show that $\varphi$ is continuous, but we are going to prove the more general fact that $\varphi$ has a continuous extension over the linear hull $LE_p(X_0)$. For every $f\in LE(X_0)$ let $f^*=f\circ(\triangle\Psi)\in C(X)$. Evidently, if  
$f=\sum_{i=1}^k\lambda_i\cdot f_i\in LE(X_0)$, then $f^*=\sum_{i=1}^k\lambda_i\cdot f_i^*$. So, we have another description of the map $\varphi$: 
$\varphi(f)=\pi_{T(f^*)}$, $f\in E(X_0)$.
\begin{claim}
Let $f=\sum_{i=1}^k\lambda_i\cdot f_i\in LE(X_0)$ with $f_i\in E(X_0)$ for all $i$. If $y^*\in Y$ and $(\triangle\Phi)(y^*)=y$, then
$T(f^*)(y^*)=\sum_{i=1}^k\lambda_i\cdot T(f_i^*)(y^*)=\sum_{i=1}^k\lambda_i\cdot\varphi(f_i)(y)$. 
\end{claim}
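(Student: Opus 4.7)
The plan is to prove the claim by reducing it to two essentially formal ingredients: the linearity of $T$ at the level of $C_p(X)\to C_p(Y)$, and the definition of $\varphi$ as a pull-back through projections of the diagonal products. Nothing delicate is at play here; the only care needed is to distinguish a function $f\in LE(X_0)\subset C(X_0)$ from its lift $f^*=f\circ(\triangle\Psi)\in C(X)$, and to remember that $\varphi$ is only defined on $E_p(X_0)$ via $\varphi(\pi_f)=\pi_{T(f^*)}$, while the statement's left-hand side bypasses $\varphi$ and works directly with $T$.

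\smallskip

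First, from $f=\sum_{i=1}^k\lambda_i\cdot f_i$ and the identity $f^*=f\circ(\triangle\Psi)$, composition with $\triangle\Psi$ being $\mathbb R$-linear yields
\[
f^*=\sum_{i=1}^k\lambda_i\cdot f_i^*\qquad\text{in }C(X).
\]
Since $T:C_p(X)\to C_p(Y)$ is a linear map, we obtain $T(f^*)=\sum_{i=1}^k\lambda_i\cdot T(f_i^*)$ in $C(Y)$, and pointwise evaluation at $y^*\in Y$ gives the first equality
\[
T(f^*)(y^*)=\sum_{i=1}^k\lambda_i\cdot T(f_i^*)(y^*).
\]

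\smallskip

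For the second equality, I need to identify each term $T(f_i^*)(y^*)$ with $\varphi(f_i)(y)$. By the definition of $\varphi$ recalled just before the claim, $\varphi(f_i)=\pi_{T(f_i^*)}$, where $\pi_{T(f_i^*)}:Y_0\to\mathbb R$ denotes the restriction to $Y_0=(\triangle\Phi)(Y)$ of the coordinate projection indexed by $T(f_i^*)\in\Phi$. The defining property of a diagonal product is that $\pi_g\circ(\triangle\Phi)=g$ on $Y$ for every $g\in\Phi$; applied with $g=T(f_i^*)$ and using the hypothesis $(\triangle\Phi)(y^*)=y$, this gives
\[
\varphi(f_i)(y)=\pi_{T(f_i^*)}\bigl((\triangle\Phi)(y^*)\bigr)=T(f_i^*)(y^*).
\]
Substituting into the previous display yields $T(f^*)(y^*)=\sum_{i=1}^k\lambda_i\cdot\varphi(f_i)(y)$, completing the proof.

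\smallskip

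There is no real obstacle here; the only point worth emphasizing is that $f\in LE(X_0)\setminus E(X_0)$ in general, so $\varphi(f)$ is not a priori meaningful, which is exactly why the claim only expresses the value $T(f^*)(y^*)$ as a linear combination of values $\varphi(f_i)(y)$ with $f_i\in E(X_0)$. This formal compatibility is what will later allow the extension $\widetilde\varphi:LE_p(X_0)\to LE_p(Y_0)$ to inherit continuity and the support machinery from $\varphi$ in the subsequent arguments.
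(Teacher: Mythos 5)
Your proof is correct and follows essentially the same route as the paper: linearity of $T$ applied to $f^*=\sum_{i=1}^k\lambda_i\cdot f_i^*$ gives the first equality, and the second reduces to the observation that for $f_i\in E(X_0)$ one has $f_i^*\in\Psi$, hence $T(f_i^*)\in\Phi$ and $\varphi(f_i)(y)=\pi_{T(f_i^*)}((\triangle\Phi)(y^*))=T(f_i^*)(y^*)$. Your write-up merely makes explicit the projection identity $\pi_g\circ(\triangle\Phi)=g$ that the paper leaves implicit.
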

It suffices to show that $T(f^*)(y^*)=\varphi(f)(y)$ for all $f\in E(X_0)$. And that is true because $f^*\in\Psi$, so $T(f^*)\in\Phi$ and $T(f^*)(y^*)=\varphi(f)(y)$. 

We define $\widetilde\varphi:LE(X_0)\to LE(Y_0)$ by 
$\widetilde\varphi(\sum_{i=1}^k\lambda_i\cdot f_i)=\sum_{i=1}^k\lambda_i\cdot\varphi(f_i)$, where $\lambda_i\in\mathbb R$ and $f_i\in E(X_0)$. The continuity of $\widetilde\varphi$ with respect to the pointwise topology is equivalent of the continuity of all linear functionals $\widetilde l_y:LE_p(X_0)\to \mathbb R$ defined by $\widetilde l_y(f)=\widetilde\varphi(f)(y)$, $y\in Y_0$.
So, fix $y_0\in Y_0$ and $f_0=\sum_{i=1}^k\lambda_i\cdot f_i\in LE(X_0)$ with $f_i\in E(X_0)$ such that $\widetilde l_{y_0}(f_0)\in V$ for some open interval $V\subset\mathbb R$. Then $f_0^*=\sum_{i=1}^k\lambda_i\cdot f_i^*$ and, by Claim 17 we have
$$T(f_0^*)(y_0^*)=\sum_{i=1}^k\lambda_i\cdot T(f_i^*)(y_0^*)=\sum_{i=1}^k\lambda_i\cdot\varphi(f_i)(y_0)=\widetilde l_{y_0}(f_0),$$
where $y_0^*\in Y$ with $(\triangle\Phi)(y_0^*)=y_0$.
Consequently, there is a neighborhood $W^*=\{g\in C_p(X):|g(x_j^*)-f_0^*(x_j^*)|<\eta_j, j=1,2,..,m\}$ of $f_0^*$ in $C_p(X)$ such that $T(g)(y_0^*)\in V$ for all $g\in W^*$. Observe that 
$$f_0^*(x_j^*)=\sum_{i=1}^k\lambda_i\cdot f_i^*(x_j^*)=\sum_{i=1}^k\lambda_i\cdot f_i(x_j)=f_0(x_j),$$
where $x_j=(\triangle\Psi)(x_j^*)$. So, $W=\{f\in LE(X_0):|f(x_j)-f_0(x_j)|<\eta_j,j=1,2,..,m\}$ is a neighborhood of $f_0$ in $LE_p(X_0)$. If $g\in W$, 
then $g=\sum_{s=1}^m\lambda_s\cdot g_s$ for some $g_s\in E(X_0)$. Hence, $g^*=\sum_{s=1}^m\lambda_s\cdot g_s^*\in W^*$ which means that
$T(g^*)(y_0^*)\in V$. Finally, according to Claim 17, $T(g^*)(y_0^*)=\sum_{s=1}^m\lambda_s\cdot\varphi(g_s)(y_0)=\widetilde l_{y_0}(g)\in V$.
Thus, $\widetilde\varphi:LE_p(X_0)\to LE_p(Y_0)$ is continuous.

Therefore, the spaces $X_0$, $\overline X_0$, $Y_0$ and $\overline Y_0$ satisfy the assumptions of Proposition 4.1.
So, we apply Proposition 4.1 with $X=X_0$ and $Y=Y_0$ to conclude that $\dim Y_0=0$. That completes the proof of Theorem 1.4. $\Box$

\smallskip
\textbf{Acknowledgments.} The authors express their
gratitude to referees for their many suggestions and improvements. 


\begin{thebibliography}{00}

\bibitem{ar}
A.~Arkhangel'skii, \textit{Problems in $C_p$-theory}, in J. van Mill and G. M. Reed eds., Open Problems in Topology, North-Holland (1990), 601--615.

\bibitem{ca}
R.~Cauty, \textit{Sur l'invariance de la dimension infinite forte par t-\'{e}quivalence}, Fund.Math. 160 (1999), 95--100.

\bibitem{en}
R.~Engelking, \textit{Theory of dimensions finite and infinite}, Sigma Series in Pure Mathematics, 10. Heldermann Verlag, Lemgo, 1995.

\bibitem{elv}
A.~Eysen, A.~Leiderman and V.~Valov, \textit{Linear and uniformly continuous surjections between $C_p$-spaces over metrizable spaces}, Math. Slovaca, 
75 (2025), no 3, 669--678. 

\bibitem{vf}
V.~Fedorchuk, \textit{Some classes of weakly infinite-dimensional spaces}, J. Math. Sciences 155 (2008), no 4, 523--570.

\bibitem{gf}
P.~Gartside and Z.~Feng, \textit{Spaces $l$-dominated by $\mathbb I$ or $\mathbb R$}, Topology Appl. 219 (2017), 1--8.

\bibitem{gkm}
R.~Gorak, M.~Krupski and W.~Marciszewski, \textit{On uniformly continuous maps between function spaces}, Fund. Math. 246 (2019), no. 3, 257–274. 

\bibitem{gu}
S.~Gul'ko, \textit{On uniform homeomorphisms of spaces of continuous functions}, Trudy Mat. Inst. Steklov, 193 (1992), 82--88 (in Russian): English translation: Proc. Steklov Inst. Math. 193 (1992), 87--93. 

\bibitem{gv}
V.~Gutev and V.~Valov, \textit{Continuous selections and $C$-spaces}, Proc. Amer. Math. Soc. 130 (2002), 233--242.

\bibitem{hy}
Y.~Hattori and K.~Yamada, \textit{Closed pre-images of $C$-spaces}, Math. Japonica 34 (1989), 555--561.

\bibitem{kl}
K.~Kawamura and A.~Leiderman, \textit{Linear continuous surjections of $C_p$-spaces over compacta}, Topology Appl. 227 (2017), 135--145. 

\bibitem{k}
M.~Krupski, \textit{On $\kappa$-pseudocompactess and uniform homeomorphisms of function spaces}, Results Math. 78 (2023), no. 4, Paper No. 154, 11 pp.

\bibitem{k1}
M.~Krupski, \textit{On the $t$-equivalence relation}, Topology Appl. 160 (2013), 368--373.

\bibitem{l}
A.~Leiderman, A private mail, December 2023.

\bibitem{llp}
A.~Leiderman, M.~Levin and V.~Pestov, \textit{On linear continuous open surjections on the spaces $C_p(X)$}, Topology Appl. 81 (1997), 269--279.

\bibitem{lmp}
A.~Leiderman, S.~Morris and V.~Pestov, \textit{The free abelian topological group and the free locally convex space on the unit interval}, J. London Math. Soc. 56 (1997), 529--538.

\bibitem{mp}
W.~Marciszewski and J.~Pelant, \textit{Absolute Borel sets and function spaces}, Trans. Amer. Math. Soc. 349 (1887), 3585--3596.

\bibitem{mi}
A.~Milutin, \textit{Isomorphism of spaces of continuous functions on compacta of power continuum}, Theoria Func., Funct. Anal. i pril. (Kharkov) 2 (1966),
150--156 (Russian).

\bibitem{p}
V.~Pestov, \textit{The coincidence of the dimension $\dim$ of $l$-equivalent topological spaces}, Soviet Math. Dokl. 26 (1982), 380--383. 

\bibitem{po}
R.~Pol, \textit{On light mappings without perfect fibers on compacta}, Tsukuba J. Math. 20 (1996), 11--19.

\bibitem{tk}
V.~Tkachuk, \textit{ Cp-theory problem book. Topological and function spaces. Problem Books in Mathematics}, Springer, New York, 2011

\bibitem{vu}
V.~Uspenskii, \textit{A characterization of compactness in terms of uniform structure in a function space}, Uspehi Mat.Nauk 37 (1982), 183-184 (in Russian).

\bibitem{vv}
V.~Valov and D.~Vuma, \textit{Function spaces and Diedonné completeness}, Quaestiones Math. 21 (1998), no 3-4, 303--309. 
\end{thebibliography}
\end{document}